\documentclass[10pt]{amsart}
\usepackage{graphicx, amsmath, amssymb, amsthm, amsfonts, mathtools, tikz-cd, comment, tikz, caption, subcaption, url, tabularx, enumitem, mathrsfs, amsrefs, appendix, float}

\setcounter{tocdepth}{1}

\newtheorem{introthm}{Theorem}
\newtheorem{thm}{Theorem}[section]
\newtheorem{lem}[thm]{Lemma}

\newtheorem{cor}[thm]{Corollary}

\theoremstyle{definition}
\newtheorem{defn}[thm]{Definition}

\newcommand{\tF}{t_\mathscr{F}(x)}

\newcommand{\Proj}{\mathrm{Proj}}
\newcommand{\Fp}{\mathscr{F}_\varphi}
\newcommand{\Pp}{\mathscr{P}_\varphi}
\newcommand{\Xp}{\mathscr{X}_\varphi}
\newcommand{\U}{\mathscr{U}}

\title{An Infinite Transitivity Theorem}
\author{Miles Gould}
\date{\today}

\begin{document}

\begin{abstract}
    In this note, we promote an infinite Kadison transitivity theorem on massive $C^*$-algebras, including the Calkin algebra. This transitivity stems from the analog of countable degree-1 saturation on pure states which is inherited from these algebras via excision. We show this saturation to be equivalent to several order-theoretic properties on the quantum filter associated to the state, in particular the property of being a quantum P-point. While we show their existence is independent from ZFC, under basic set theoretic assumptions, we produce a plethora of these states. Finally, we find an irreducible representation of the Calkin algebra which fails infinite transitivity.
\end{abstract}

\maketitle

\section{Introduction}

\subsection{Motivation}

The Kadison transitivity theorem is among the most fundamental and classical theorems in $C^*$-algebra theory, and is the basis for many results in the representation theory of $C^*$-algebras.

Since the advent of continuous model theory in 2008 \cite{YA08}, its applications in operator algebras have proliferated. In 2017 \cite{FA17}, it was shown by Farah that, in the theory of irreducible representations of $C^*$-algebras, ultraproducts admit an infinite version of the Kadison transitivity theorem. Here we show this infinite transitivity also applies to certain irreducible representations on corona algebras $\mathscr{Q}(A)=\mathscr{M}(A)/A$ associated to nonunital $\sigma$-unital $C^*$-algebras $A$, in particular, the Calkin algebra $\mathscr{Q}(H)$. These algebras were first shown to have a (not so) weak saturation called countable degree-1 saturation by Farah and Hart in 2011 \cite{FA11}.

Around the same time, in 2014 \cite{FA14}, Farah and Weaver formalized the notion of maximal quantum filters, the quantum analog of ultrafilters. These filters assisted in the study of excision, which allows one to extract the value of a pure state $|\varphi(b)|$ on an operator $b$ using the quantities $\|aba\|$ for $a$ in the quantum filter associated to $\varphi$. Our version of infinite transitivity relies on a combination of a strong form of excision and countable degree-1 saturation.

\subsection{Outline}

We say a representation $\pi:C\rightarrow\mathscr{B}(H)$ of a $C^*$-algebra $C$ admits \textit{infinite transitivity} if for every separable subspace $K$ of $H$, the map $C\rightarrow \mathscr{B}(K)$ by $c\mapsto P_K\pi(c)P_K$ is onto, where $P_K$ is the projection onto $K$. This is equivalent to the statement that there is always an operator in $C$ sending a given orthonormal sequence to another given bounded sequence in $H$. Since the Calkin algebra is countably degree-1 saturated, it is reasonable to ask: Does every irreducible representation of the Calkin algebra have infinite transitivity? We produce a negative answer.

\begin{introthm}\label{counter}
    There exists an irreducible representation of $\mathscr{Q}(H)$ which does not admit infinite transitivity.
\end{introthm}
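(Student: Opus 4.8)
The plan is to exploit the characterizations established above. For a pure state $\varphi$ of $\mathscr{Q}(H)$, the GNS representation $\pi_\varphi$ admits infinite transitivity precisely when $\varphi$ satisfies the relevant form of countable degree-$1$ saturation, equivalently precisely when the quantum filter $\Fp$ is a quantum P-point. Since every irreducible representation of $\mathscr{Q}(H)$ is unitarily equivalent to some $\pi_\varphi$ with $\varphi$ pure, it suffices to produce --- in ZFC, with no extra set-theoretic hypotheses --- a pure state $\varphi$ of $\mathscr{Q}(H)$ whose quantum filter is not a quantum P-point. This is exactly the counterpoint to \cite{FA17}: ultraproducts are fully countably saturated, while $\mathscr{Q}(H)$ is only countably degree-$1$ saturated, so a countable family of non-degree-$1$ conditions such as ``$\pi_\varphi(c)\,\xi_n=\eta_n$ for all $n$'' need not be simultaneously realizable.

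The point of leverage is the classical layer: I would arrange that $\varphi$ restricts, on the atomic masa $D:=\ell^\infty/c_0\subseteq\mathscr{Q}(H)$, to the character given by a classical non-P-point ultrafilter. Fix a partition $\mathbb{N}=\bigsqcup_n A_n$ into infinite sets, and let $\mathcal{U}$ be a Fubini sum of free ultrafilters along it (take $\mathcal{W}\otimes\mathcal{V}$ under an identification $\mathbb{N}\cong\mathbb{N}\times\mathbb{N}$ sending $A_n$ to the $n$-th column). Then $\mathcal{U}$ exists outright in ZFC, is free, and is not a P-point: with $B_n:=\mathbb{N}\setminus(A_1\cup\cdots\cup A_n)$ we have $B_n\in\mathcal{U}$, while any $B$ with $B\subseteq^* B_n$ for all $n$ meets every $A_k$ in a finite set and hence lies outside $\mathcal{U}$, so the decreasing sequence $(B_n)_n$ has no pseudo-intersection in $\mathcal{U}$. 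View $\mathcal{U}$ as a pure state $\psi_{\mathcal{U}}$ of $D$. The set of states of $\mathscr{Q}(H)$ restricting to $\psi_{\mathcal{U}}$ on $D$ is a nonempty weak${}^{*}$-compact convex face of the state space; by Krein--Milman it has an extreme point $\varphi$, which, being extreme in a face, is a pure state of $\mathscr{Q}(H)$. Let $\pi_\varphi$ be its GNS representation.

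It remains to see that $\Fp$ is not a quantum P-point, and this should follow from the earlier analysis that a quantum filter restricts, along a masa, to a classical filter and that the quantum P-point property restricts to the classical P-point property of that restriction: applied to $D$ and $\varphi$, the restriction of $\Fp$ to $D$ is the filter of $\mathcal{U}$, which is not a P-point, whence $\Fp$ is not a quantum P-point and $\pi_\varphi$ fails infinite transitivity. If such a restriction principle is not available off the shelf, one proves the needed instance by hand: a quantum pseudo-intersection $q\in\Fp$ of the projections $d_n\in D$ corresponding to $B_n$ (each $d_n\in\Fp$ since $B_n\in\mathcal{U}$) has an infinite-rank lift $Q$ with $\mathbb{1}_{\mathbb{N}\setminus B_n}Q$ compact for all $n$, so an orthonormal basis of $\mathrm{ran}\,Q$ concentrates on every $B_n$; a gliding-hump extraction then produces a set $B$ with $B\subseteq^* B_n$ for all $n$ carrying an infinite-rank subprojection $q'\leq q$ with $q'\in\Fp$, which forces $B\in\mathcal{U}$ --- contradicting the choice of $\mathcal{U}$.

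I expect the main obstacle to be exactly that last point: arranging that the pseudo-intersection can be located inside the quantum filter $\Fp$ and within the masa $D$ --- so that its classical shadow genuinely belongs to $\mathcal{U}$ --- rather than merely sitting below a member of $\Fp$. This is where the precise order-theoretic form of the quantum P-point property, in particular a version permitting pseudo-intersections to be chosen inside a prescribed masa, does the real work, and checking that our $\varphi$ interacts correctly with $D$ under that formulation is the heart of the argument. An alternative that sidesteps the characterization would be to read off the failing pair $(\xi_n),(\eta_n)$ for infinite transitivity directly from the gap $(d_n)$ inside $H_{\pi_\varphi}$; but funneling the combinatorics of $\mathcal{U}$ into such sequences is more delicate than invoking the equivalences already in hand, so the route above is preferable.
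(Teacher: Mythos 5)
There is a genuine gap, and it sits in your very first sentence. The paper proves only one direction of the relationship you invoke: a P-state (equivalently, $\Fp$ a quantum P-point) yields infinite transitivity of $\pi_\varphi$ (Theorems \ref{satequiv} and \ref{iktt}). The converse --- that failure of the quantum P-point property forces failure of infinite transitivity --- is not established anywhere; indeed the paper explicitly flags ``whether infinite transitivity in $(\mathscr{Q}(H),\pi_\varphi)$ implies degree-1 saturation of $(\mathscr{Q}(H),\varphi)$'' as an open question. So producing a pure state whose quantum filter is not a quantum P-point (which your Fubini-sum construction does, and which Theorem \ref{stoneinj} already does more directly: just take the diagonal state $\varphi_\U$ for any non-P-point $\U$) does not let you conclude anything about transitivity. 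Your second and third paragraphs, including the ``by hand'' gliding-hump argument, are all aimed at verifying non-P-pointness of $\Fp$; none of them exhibit a sequence of vectors and targets that no element of $\mathscr{Q}(H)$ can realize, which is what the theorem requires.

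The route you dismiss at the end as ``more delicate'' is in fact the one the paper takes, and the delicacy is real: a generic non-P-point does not obviously produce a failing instance. The paper chooses $\U$ \emph{idempotent} (which exists in ZFC by Ellis's theorem) and takes the diagonal state $\varphi_\U$ along a basis $(\eta_n)$, with $\xi_j=\pi(s^j)+L_{\varphi_\U}$ for $s$ the unilateral shift. The target $\pi_\varphi(a)\xi_j=\delta_{j0}\xi_j$ forces $\lim_{n\rightarrow\U}a_n=1$ while $\lim_{n\rightarrow\U}a_{n+j}=0$ for all $j\geq 1$; idempotency of $\U$ guarantees that the set of $j$ for which the translate of $\{n:|a_n-1|<\epsilon\}$ remains $\U$-large is itself $\U$-large, hence contains some $j\geq 1$, a contradiction. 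That combinatorial step --- translates of $\U$-large sets staying $\U$-large for $\U$-many $j$ --- is precisely what idempotency buys and what a Fubini sum does not supply. To repair your proof you would either need to prove the missing converse implication (infinite transitivity $\Rightarrow$ quantum P-point), or replace your ultrafilter and abstract extension argument with a concrete failing instance along the lines above.
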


Of course, the next question is: Does any representation admit infinite transitivity? Under basic set-theoretic assumptions, in particular the continuum hypothesis, we produce a positive answer.

\begin{introthm}
    Under the continuum hypothesis, there exists an irreducible representation of $\mathscr{Q}(H)$ which admits infinite transitivity.
\end{introthm}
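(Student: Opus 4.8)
The plan is to reduce the statement to a set-theoretic construction of a suitable pure state. By the equivalences developed in the body of the paper, an irreducible representation of $\mathscr{Q}(H)$ admits infinite transitivity exactly when it is unitarily equivalent to the GNS representation $\pi_\varphi$ of a pure state $\varphi$ whose associated quantum filter $\Fp$ is a quantum P-point. It therefore suffices, assuming CH, to build a maximal quantum filter on the projections of $\mathscr{Q}(H)$ that is a quantum P-point; this is the noncommutative counterpart of Rudin's theorem producing P-points on $\omega$ under CH, and I would imitate its proof via a transfinite recursion of length $\omega_1 = \mathfrak{c}$.

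For the recursion, use $\mathfrak{c} = \aleph_1$ to fix an enumeration $(\mathbf{p}^\alpha)_{\alpha<\omega_1}$ of all countable $\le$-decreasing sequences of projections of $\mathscr{Q}(H)$, with each sequence listed cofinally often, together with an enumeration $(b_\alpha)_{\alpha<\omega_1}$ of a norm-dense set of projections, the latter used to drive maximality. I would then construct an increasing, continuous chain $(\mathscr{F}_\alpha)_{\alpha<\omega_1}$ of proper quantum filters, each countably generated (possible since every $\alpha<\omega_1$ is countable and each successor step adds only finitely many generators), with unions taken at limits. At stage $\alpha+1$, writing $q_0, q_1, \dots$ for a generating sequence of $\mathscr{F}_{\le\alpha}$: (i) if every tail of $\mathbf{p}^\alpha$ already lies in $\mathscr{F}_{\le\alpha}$, adjoin a projection $r_\alpha$ which is a pseudo-intersection of $\mathbf{p}^\alpha$ inside the filter, i.e. $r_\alpha \lesssim p^\alpha_n$ for all $n$; and (ii) adjoin a witness deciding $b_\alpha$, so that $\bigcup_\alpha \mathscr{F}_\alpha$ is maximal.

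The crux is producing $r_\alpha$ in step (i), and this is exactly where countable degree-1 saturation of $\mathscr{Q}(H)$ (Farah--Hart \cite{FA11}) enters in place of the classical diagonalization over $\omega$. The conditions on $r_\alpha$ --- that it be a projection with $\|r_\alpha\| = 1$, that $\|(1-p^\alpha_n)r_\alpha\|$ and $\|(1-q_n)r_\alpha\|$ be arbitrarily small for each $n$, and the further (finitely satisfiable) norm conditions forcing $\mathscr{F}_{\le\alpha}\cup\{r_\alpha\}$ to generate a proper quantum filter --- should be arrangeable as a countable degree-1 type over $\mathscr{Q}(H)$. Each finite fragment of this type is realized: finitely many of the $q_n$ admit a common almost-invariant projection because $\mathscr{F}_{\le\alpha}$ is a proper quantum filter, and since $\mathbf{p}^\alpha$ is decreasing with all tails in the filter, a sufficiently deep $p^\alpha_n$ together with that projection satisfies the fragment. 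Countable degree-1 saturation then realizes the full type by some $r_\alpha$, and step (ii) is handled by the same principle --- indeed the skeleton of this recursion is just the standard degree-1-saturated construction of a maximal quantum filter (a pure state), decorated with the extra pseudo-intersection requirement.

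Finally, set $\mathscr{F} = \bigcup_{\alpha<\omega_1}\mathscr{F}_\alpha$. Because $\omega_1$ has uncountable cofinality and each $\mathscr{F}_\alpha$ is countably generated, any countable set of projections from $\mathscr{F}$ lies in some $\mathscr{F}_\alpha$; so every countable decreasing sequence in $\mathscr{F}$ is some $\mathbf{p}^\beta$ all of whose tails have entered the filter by some countable stage, and since $\mathbf{p}^\beta$ is revisited cofinally often it was assigned a pseudo-intersection $r_{\beta'}\in\mathscr{F}$ at a later stage $\beta'$. Hence $\mathscr{F}$ is a quantum P-point, and it is maximal by (ii); the corresponding pure state $\varphi$ has $\Fp = \mathscr{F}$, so $\pi_\varphi$ admits infinite transitivity. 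The main obstacle I anticipate is the third paragraph: checking that the pseudo-intersection-plus-properness requirement genuinely is (or is dominated by) a \emph{degree-1} type in the sense of Farah--Hart, and verifying finite satisfiability, which --- unlike the trivial finite-intersection property of ultrafilters on $\omega$ --- forces one to produce almost-common invariant subspaces for finitely many filter elements together with a deep term of the given decreasing sequence, i.e. to use the full strength of the definition of a proper quantum filter.
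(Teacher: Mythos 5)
Your reduction --- produce, under CH, a pure state whose quantum filter is a quantum P-point, then invoke Theorems \ref{satequiv} and \ref{iktt} --- is the paper's, but your construction of that state is genuinely different. The paper runs no transfinite recursion: Theorem \ref{stoneinj} shows that the diagonal states $\varphi_\mathscr{U}(\pi(a))=\lim_{n\rightarrow\mathscr{U}}\tilde{\varphi_n}(a)$ along a $\sigma$-unit of projections are pure and are P-states \emph{if and only if} $\mathscr{U}$ is a P-point on $\mathbb{N}$ (via the sets $X_{m,n}$ and Lemma \ref{beatr}), so CH enters only through Rudin's classical existence of ($2^{\mathfrak{c}}$ RK-inequivalent selective) P-points, giving Theorem \ref{corsat}(1). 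You instead redo Rudin's recursion inside $\mathscr{Q}(H)$ with countable degree-1 saturation replacing the classical diagonalization, and I believe this works: the key type $\{\|x\|=1,\ x\geq 0\}\cup\{\|ax-x\|=0:a\in\mathscr{F}_{\leq\alpha}\}$ is a (generalized) degree-1 norm type; its finite satisfiability is exactly the approximate downward directedness of quantum filters \cite[5.3.4]{FA19}; and once the realization $r_\alpha$ satisfies $ar_\alpha=r_\alpha$ for every $a$ in the current countable filter, any finite product from $\mathscr{F}_{\leq\alpha}\cup\{r_\alpha\}$ involving $r_\alpha$ collapses to a power of $r_\alpha$ and so has norm $1$ --- properness is automatic, and the extra conditions you worry about in your last paragraph are unnecessary (to make $r_\alpha$ a projection, interpolate as in Lemma \ref{cof}, or simply verify condition (2) of Theorem \ref{satequiv} rather than (4)). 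Two points need repair: maximality cannot be driven by deciding only a dense set of projections, since a quantum filter that decides every projection need not be maximal --- enumerate all of $\mathscr{Q}(H)_{+,1}$ instead, which has size $\aleph_1$ under CH; and the converse half of your opening ``exactly when'' is open (the paper says so explicitly), though you only use the known direction. As for what each route buys: the paper's yields the sharp P-point correspondence, which it reuses for the non-existence result and the counterexample to infinite transitivity, while yours is self-contained modulo saturation, avoids the bookkeeping of Lemma \ref{beatr}, and produces P-states that need not be diagonal with respect to any $\sigma$-unit.
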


To accomplish this, we define the countable degree-$1$ saturation (def. \ref{normsat} and \ref{deg1}) of a $C^*$-algebra with a state, $(C,\varphi)$, and deduce various consequences of it. In particular, for such a structure with pure $\varphi$, the quantum filter (def. \ref{filt}) associated is a P-filter (def. \ref{Pstuff}). Thanks to the strong excision granted by these quantum filters, we show that many of these order-theoretic consequences of this saturation are, in fact, equivalent to it (theorem \ref{satequiv}). As such, we call these saturated states P-states. Then we show that the GNS-representation associated to a P-state, called a P-representation, has infinite Kadison transitivity.

\begin{introthm}\label{iktt}
    Let $A$ be a nonunital, $\sigma$-unital $C^*$-algebra, and $\pi:\mathscr{Q}(A)\rightarrow\mathscr{B}(H)$ a P-representation. Then $\pi$ admits infinite transitivity. In particular, for every separable subspace $K$ of $H$ and $T\in\mathscr{B}(K)$ self-adjoint (positive, unitary, resp.), there exists $c\in\mathscr{Q}(A)$ such that $\|c\|=\|T\|$, $P_K\pi(c)P_K=T$, and $c$ is self-adjoint (positive, unitary, resp.).
\end{introthm}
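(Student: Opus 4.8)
The plan is to realize, via the countable degree-$1$ saturation of $(\mathscr{Q}(A),\varphi)$ available for a P-state $\varphi$ (Definitions~\ref{normsat}, \ref{deg1} and Theorem~\ref{satequiv}), a carefully designed countable type whose solutions are precisely the operators the theorem asks for, the finite fragments of the type being witnessed by the classical Kadison transitivity theorem applied to finite-dimensional compressions of $T$.

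\emph{Set-up.} Let $\varphi$ be the (pure) P-state with $\pi=\pi_\varphi$, and $\xi$ a cyclic unit vector implementing $\varphi$. We may assume $\dim K=\aleph_0$, the finite-dimensional case being classical transitivity. Fix an orthonormal basis $(\eta_n)_{n\ge1}$ of $K$, write $K_N=\operatorname{span}\{\eta_1,\dots,\eta_N\}$ and $T_{mn}=\langle T\eta_n,\eta_m\rangle$. Since $\xi$ is cyclic, $\pi(\mathscr{Q}(A))\xi$ is dense in $H$, so classical transitivity provides $a_n\in\mathscr{Q}(A)$ with $\pi(a_n)\xi=\eta_n$; then $\varphi(a_m^{*}ca_n)=\langle\pi(c)\eta_n,\eta_m\rangle$ for every $c$, and so the requirement $P_K\pi(c)P_K=T$ is \emph{equivalent} to the countable family $\{\varphi(a_m^{*}ca_n)=T_{mn}:m,n\ge1\}$.

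\emph{Self-adjoint and positive cases.} For self-adjoint $T$, take the countable type $\mathfrak t$ in one variable $x$ consisting of $\|x-x^{*}\|=0$, $\|x\|\le\|T\|$, $\|x\|\ge\|T\|-1/j$ ($j\in\mathbb N$), and $\varphi(a_m^{*}xa_n)=T_{mn}$ ($m,n\in\mathbb N$); these are degree-$1$ conditions, and any solution is a self-adjoint element of norm exactly $\|T\|$ with $P_K\pi(x)P_K=T$, so it suffices to verify consistency. Regard $T$ as acting on $H$ (by $0$ on $K^{\perp}$). Given a finite $\mathfrak t_0\subseteq\mathfrak t$ involving indices $\le N$ and a tolerance $1/j_0$, choose $N'\ge N$ with $\|P_{K_{N'}}TP_{K_{N'}}\|>\|T\|-1/j_0$ — possible since for self-adjoint $T$ these compression norms increase to $\|T\|$ (numerical radius equals norm). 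Applying the form of the Kadison transitivity theorem that preserves self-adjointness and controls the norm, to the irreducible $\pi$, the orthonormal vectors $\eta_1,\dots,\eta_{N'}$, and the self-adjoint operator $S:=P_{K_{N'}}TP_{K_{N'}}$, produces a self-adjoint $c_0\in\mathscr{Q}(A)$ with $\pi(c_0)\eta_n=S\eta_n$ for $n\le N'$ and $\|c_0\|\le\|S\|+\varepsilon$; thus $\|T\|-1/j_0<\|c_0\|\le\|T\|+\varepsilon$ and $\varphi(a_m^{*}c_0a_n)=\langle S\eta_n,\eta_m\rangle=T_{mn}$ for $m,n\le N'$, so $c_0$ realizes $\mathfrak t_0$ within $\varepsilon$. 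Countable degree-$1$ saturation then realizes $\mathfrak t$, giving the desired $c$. The positive case is identical once one replaces ``$\|x\|\le\|T\|$'' by the two degree-$1$ conditions $\|x\|\le\|T\|$ and $\bigl\|\,\|T\|\cdot 1-x\,\bigr\|\le\|T\|$ (which, with $x=x^{*}$, force $0\le x\le\|T\|$) and uses the positive form of Kadison transitivity, noting $S=P_{K_{N'}}TP_{K_{N'}}\ge0$ when $T\ge0$.

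\emph{Unitary case, and the main obstacle.} When $T$ is unitary it suffices to find a \emph{unitary} $c\in\mathscr{Q}(A)$ with $\varphi(a_m^{*}ca_n)=T_{mn}$ for all $m,n$: then $\pi(c)$ is isometric, so $1=\|\pi(c)\eta_n\|^{2}=\|P_K\pi(c)\eta_n\|^{2}+\|(1-P_K)\pi(c)\eta_n\|^{2}$, while $\|P_K\pi(c)\eta_n\|^{2}=\sum_m|T_{mn}|^{2}=\|T\eta_n\|^{2}=1$, forcing $\pi(c)\eta_n=\sum_mT_{mn}\eta_m=T\eta_n$, hence $P_K\pi(c)P_K=T$ and $\|c\|=1=\|T\|$. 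The type $\{\varphi(a_m^{*}xa_n)=T_{mn}\}$ is consistent \emph{with the side constraint that $x$ be unitary}: for fixed $N$ and $N'$ large, $P_{K_{N'}}T|_{K_N}$ is a near-isometry of $K_N$ into $K_{N'}$, hence is close to a genuine unitary $V$ of $K_{N'}$ whose matrix coefficients approximate the $T_{mn}$, $m,n\le N$, as closely as we wish, and the unitary form of Kadison transitivity lifts $V$ to a unitary of $\mathscr{Q}(A)$ witnessing the corresponding fragment. The subtle step — the one I expect to be the main obstacle — is that \emph{unitarity is not a degree-$1$ condition} ($x^{*}x=1$ is quadratic), so countable degree-$1$ saturation by itself would return only a contraction with the prescribed matrix coefficients, not a unitary. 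Bridging this gap is exactly where the strong excision of the P-filter $\mathscr{F}_\varphi$ must enter: it is what lets the exact value $\varphi$ assigns to the relevant quadratic expressions in $x$ be recovered from degree-$1$ data, and hence the finite-dimensional unitary witnesses be amalgamated into a single exact unitary solution. Everything else follows cleanly from countable degree-$1$ saturation together with the classical transitivity theorem.
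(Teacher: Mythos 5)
Your self-adjoint and positive cases are essentially correct, and they take a genuinely different route from the paper: you encode only the matrix coefficients $\varphi(a_m^*xa_n)=T_{mn}$, which are honest degree-$1$ conditions of $(\mathscr{Q}(A),\varphi)$, and realize them by plain countable degree-$1$ saturation of the pair (available for a P-state by Theorem~\ref{satequiv}). The paper instead encodes the \emph{vector} equations $\pi(x)\eta_n=T\eta_n$ as conditions $\|xa_n-b_n\|_2^{\varphi}=0$ and uses Lemma~\ref{fields} --- where the excising element $a_\varphi\in\Fp$ converts $\|P(\bar x)\|_2^\varphi$ into the degree-$1$ norm $\|P(\bar x)a_\varphi\|$ --- to realize them. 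The two conclusions differ: yours gives $P_K\pi(c)\eta_n=T\eta_n$, the paper's gives $\pi(c)\eta_n=T\eta_n$, i.e.\ $\pi(c)K\subseteq K$. For the literal statement about self-adjoint and positive $T$ your weaker conclusion suffices (modulo the cosmetic point that your inequality conditions $\|x\|\le\|T\|$ should be recast as equalities or as the compact-range conditions of Lemma~\ref{fields}).

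The unitary case, however, is a genuine gap, and you have correctly located it but not closed it. Your appeal to ``strong excision recovering quadratic data'' is not an argument: no mechanism is given for amalgamating the finite-dimensional unitary witnesses into an exact unitary realization, and degree-$1$ saturation alone will only return a contraction with the right matrix coefficients. The paper's resolution is to write $T=e^{iS}$ with $S$ self-adjoint, produce self-adjoint $s$ with $\pi(s)|_K=S$, and take $c=e^{is}$; this works because $\pi(s)K\subseteq K$ and $\pi(s)=\pi(s)^*$ force $\pi(s)$ to commute with $P_K$, whence $P_K\pi(e^{is})P_K=e^{iS}$ on $K$. That step is exactly where your matrix-coefficient formulation fails: from $P_K\pi(s)P_K=S$ alone one cannot conclude $P_K\pi(e^{is})P_K=e^{iS}$, since the compression of an exponential is not the exponential of the compression unless $K$ is invariant. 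So to repair the unitary case you must upgrade your type to force the vector equations $\pi(s)\eta_n=S\eta_n$, which is precisely the role of the $\|\cdot\|_2^\varphi$-conditions and the excising element in Lemma~\ref{fields}; without that ingredient the proof does not go through.
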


This follows from infinite Glimm-Kadison on direct sums of P-representations.

\begin{introthm}\label{igk}
    Let $A$ be a nonunital, $\sigma$-unital $C^*$-algebra, and $\pi:\mathscr{Q}(A)\rightarrow\mathscr{B}(H)$ a direct sum of countably many inequivalent P-representations $\pi_j:\mathscr{Q}(A)\rightarrow\mathscr{B}(H_j)$ and $K_j$ a separable subspace of $H_j$. Then the map $\mathscr{Q}(A)\rightarrow\prod_j\mathscr{B}(K_j)$ by $c\mapsto P_K\pi(c)P_K$ is onto, where $K=\bigoplus_jK_j$.
\end{introthm}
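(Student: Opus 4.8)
The plan is to rephrase the conclusion as the realization of a countable degree-$1$ type over $\mathscr{Q}(A)$ and to invoke countable degree-$1$ saturation, giving a corona analogue of Farah's ultraproduct argument in \cite{FA17}. Write $\pi_j=\pi_{\varphi_j}$ for the P-state $\varphi_j$, with cyclic vector $\xi_{\varphi_j}$. I would begin with the harmless reduction: since $\pi(c)=\bigoplus_j\pi_j(c)$ commutes with every $P_{H_j}$, the operator $P_K\pi(c)P_K$ is automatically block diagonal, equal to $\bigoplus_jP_{K_j}\pi_j(c)P_{K_j}$; so for a target $T=(T_j)_j\in\bigoplus_j\mathscr{B}(K_j)$ with $M:=\sup_j\|T_j\|<\infty$ it suffices to find a single $c\in\mathscr{Q}(A)$, of norm $\le M+1$, with $P_{K_j}\pi_j(c)P_{K_j}=T_j$ for every $j$ at once. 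Fixing an orthonormal basis $(\xi^j_n)_n$ of each separable $K_j$, the latter amounts to $\langle\pi_j(c)\xi^j_n,\xi^j_m\rangle=\langle T_j\xi^j_n,\xi^j_m\rangle$ for all $j,n,m$; and since the $\xi^j_n$ and the $T_j\xi^j_n$ lie in $H_{\varphi_j}$, hence are approximated by GNS vectors $\pi_j(a^j_n)\xi_{\varphi_j}$, $\pi_j(b^j_n)\xi_{\varphi_j}$, each of these equalities reduces (up to a controllable error, to be cleaned up by a routine limiting argument) to the degree-$1$ condition $\|c\,a^j_n-b^j_n\|_{\varphi_j}=0$ in the language of $\mathscr{Q}(A)$ expanded by the GNS seminorm of $\varphi_j$.

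Collecting these over all $j,n$ together with $\|c\|\le M+1$ yields a countable degree-$1$ type $\mathfrak{t}(c)$ over the expanded structure $(\mathscr{Q}(A),(\varphi_j)_j)$. For consistency, any finite subset of $\mathfrak{t}$ mentions only finitely many representations $j\in F$ and finitely many basis vectors in each; as the $\pi_j$ ($j\in F$) are mutually inequivalent irreducible representations of $\mathscr{Q}(A)$, the classical transitivity theorem for finite direct sums of mutually inequivalent irreducible representations (the finite Glimm--Kadison theorem) produces $c_0\in\mathscr{Q}(A)$ with $\|c_0\|\le M+\varepsilon$ realizing the corresponding finitely many equalities exactly, hence satisfying that finite subset. (It is precisely inequivalence of the $\varphi_j$ that makes $\mathfrak{t}$ consistent; equivalent ones would impose conflicting demands.)

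It then remains to realize $\mathfrak{t}$, i.e.\ to know that the countable joint expansion $(\mathscr{Q}(A),(\varphi_j)_j)$ is itself countably degree-$1$ saturated. Each single expansion $(\mathscr{Q}(A),\varphi_j)$ is so by hypothesis --- this is exactly what it means for $\varphi_j$ to be a P-state (Definitions \ref{normsat}, \ref{deg1}; Theorem \ref{satequiv}) --- and the passage to the countable joint expansion is where the strong excision advertised in the introduction is used: since the $\varphi_j$ are pairwise inequivalent their quantum P-filters are independent, so simultaneously excising finitely many of them decouples the seminorm conditions $\|\cdot\|_{\varphi_j}$ and reduces a degree-$1$ type over $(\mathscr{Q}(A),(\varphi_j)_j)$ to one over $\mathscr{Q}(A)$ itself, which is countably degree-$1$ saturated by Farah--Hart \cite{FA11}. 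A realization $c$ of $\mathfrak{t}$ then satisfies $P_{K_j}\pi_j(c)P_{K_j}=T_j$ for every $j$, so $P_K\pi(c)P_K=\bigoplus_jT_j$, as required.

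The hard part will be this last step: showing the countable joint expansion inherits countable degree-$1$ saturation, and in particular making the decoupling via simultaneous excision across countably many pairwise-inequivalent quantum P-filters precise. This is where the P-property is genuinely needed rather than bare degree-$1$ saturation, and where the order-theoretic content of Theorem \ref{satequiv} (P-filters, quantum P-points) must be brought to bear; it is also the exact point at which the inequivalence hypothesis on the $\pi_j$ does work beyond the finitary Glimm--Kadison step. The remaining technical points --- the uniform norm bound $M+\varepsilon$ in that finitary step, so that the realization lies in a fixed ball and $P_K\pi(c)P_K$ is an honest element of $\bigoplus_j\mathscr{B}(K_j)$, and the bookkeeping of the approximation errors $\pi_j(a^j_n)\xi_{\varphi_j}\approx\xi^j_n$ --- are routine.
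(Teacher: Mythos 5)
Your proposal is essentially the paper's argument: encode the conclusion as a countable type whose conditions are $\|c\|=\|T\|$ together with the vector equations $\|ca^j_n-b^j_n\|^{\varphi_j}_2=0$, check satisfiability of each finite fragment by the classical Glimm--Kadison theorem for finitely many inequivalent irreducible representations, and then realize the whole type using countable degree-$1$ saturation of $\mathscr{Q}(A)$ plus the P-property of the $\varphi_j$. The ``hard part'' you defer is precisely Lemma \ref{fields} of the paper, so the architecture matches exactly.

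One correction to your intuition about that deferred step, which would save you effort if you tried to carry it out: the decoupling does \emph{not} use inequivalence of the $\varphi_j$, and there is no need for the quantum P-filters to be ``independent'' or to be excised simultaneously by interacting elements. In Lemma \ref{fields} each state $\varphi_j$ is handled separately: one picks a single $a_{\varphi_j}\in\mathscr{F}_{\varphi_j}$ (P-excision, Corollary \ref{sqe}) excising $\varphi_j$ on the separable subalgebra generated by the coefficients and the approximate realizations, and replaces the condition $\|P(\bar x)\|^{\varphi_j}_2=0$ by the norm condition $\|P(\bar x)a_{\varphi_j}\|=0$. Membership of $a_{\varphi_j}$ in the filter gives $\|P(\bar b)\|^{\varphi_j}_2\le\|P(\bar b)a_{\varphi_j}\|$ for \emph{every} $\bar b$ (so realizations of the converted type realize the original), while excision on the separable subalgebra gives the reverse approximate inequality on the witnesses $\bar b_m$ (so the converted type is still satisfiable); no interaction between different $j$'s occurs. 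Inequivalence of the $\pi_j$ enters only where you already placed it, namely in the finitary Glimm--Kadison satisfiability check. Note also that this conversion is genuinely one-sided --- it only handles conditions $\|P(\bar x)\|^{\varphi}_2\le r$ --- which suffices here because your seminorm conditions all have target $0$.
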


As in theorem \ref{iktt}, we can choose $c$ as we like. With the theory complete, under the existence of a P-point, we produce a P-state, on the corona algebra $\mathscr{Q}(A)$ of various nonunital, $\sigma$-unital $A$. However, as shown in \cite{BI13}, there is a forcing in which these states cannot exist.

Finally, we find an irreducible representation of the Calkin algebra which fails infinite transitivity, granting theorem \ref{counter}. Compare this situation with that of \cite{FA10}, wherein the question of whether the commutant of $\mathscr{B}(H)$ in $\mathscr{B}(H)^\U$ is trivial also depends on the ultrafilter $\U$. In our case, for a diagonal pure state on $\mathscr{Q}(H)$ (or equivalently $\mathscr{B}(H)$), whether the GNS-representation admits infinite transitivity depends on the corresponding ultrafilter $\U$. Another recent (and very successful) instance of this ultrafilter-dependence phenomenon was in \cite{ZA25}.

\subsection{Acknowledgements}

I would like to thank Tristan Bice and Beatriz Zamora-Avilés for their useful insight into various proofs modified throughout, and Alan Dow for the key to finding a counterexample to infinite transitivity. I give my gratitude to Charles Akemann and David Gao, not only for their help with this paper, but for their overall teaching and mentorship. I also thank Juris Steprāns and Michael Carey for several valuable conversations about set and model theory as I was writing this paper. Of course, I am indebted to Ilijas Farah for his consistent prowess and for teaching me the beauty of combinatorial set theory and massive $C^*$-algebras. Without his existing literature, initial prediction, and advisory, this paper would never have come to fruition.

\subsection{Preliminaries}

Throughout, $A$ denotes an arbitrary $C^*$-algebra and $\varphi$ denotes an arbitrary state on $A$. However, when dealing with large $C^*$-algebras (like $\mathscr{Q}(A)$) we will denote them by $C$ to avoid confusion with $A$. We denote finite subsets $F\subseteq X$ by $F\Subset X$.

Before understanding saturation of states, we need to understand degree-1 saturation of $C^*$-algebras, an extremely powerful property enjoyed by massive corona algebras.

\begin{defn}\label{normsat}
    A \textit{degree-1 $*$-polynomial} $P(\bar{x})$ over $C$ in variables $\bar{x}=(x_1,\dots,x_m)$ is an expression of the form
    \[P(\bar{x})=\sum_{i\leq m,j\leq n}(a_{ij1}x_ja_{ij2}+a_{ij3}x_j^*a_{ij4})+a_0,\]
    where $a_0\in C$, $a_{ijk}\in \tilde{C}$, $n\in\mathbb{N}$. A \textit{degree-$1$ condition of }$C$ is an expression of the form
    \[\|P(\bar{x})\|=r,\]
    where $P(\bar{x})$ is a degree-$1$ $*$-polynomial and $r\geq0$. A \textit{countable degree-$1$ type} $t(\bar{x})$ is collection of degree-$1$ norm conditions, i.e.
    \[t(\bar{x})=\{\|P_j(\bar{x})\|=r_j:j\in\mathbb{N}\}.\]
    We call $t(\bar{x})$ \textit{satisfiable} if for every $F\Subset \mathbb{N}$, $\epsilon>0$, there exists $\bar{b}$ in $C$ such that $\max_{j\in F}|\|P_j(\bar{b})\|-r_j|<\epsilon$. Similarly, $t(\bar{x})$ is \textit{realized} if there exists $\bar{b}$ such that $\|P_j(\bar{b})\|=r_j$ for all $j\in\mathbb{N}$. We say that $C$ is \textit{countably degree-$1$ saturated} if every satisfiable countable degree-$1$ type is realized.
\end{defn}

The most exotic examples of countably degree-1 norm saturated $C^*$-algebras are those whose saturation does not follow from the standard model-theoretic constructions, namely non-ultraproduct corona algebras. To define coronas, we follow \cite[13]{FA19}.

\begin{defn}\label{corona}
    For a nonunital $C^*$-algebra $A$, we define the \textit{strict topology on} $A$ by convergence $a_\lambda\rightarrow a$ iff $\lim_\lambda\|(a_\lambda-a)h\|=\lim_\lambda\|h(a_\lambda-a)\|=0$ for all $h\in A$. We define the \textit{multiplier algebra} $\mathscr{M}(A)$ as the completion of $A$ in the strict topology, with algebraic operations defined naturally on the Cauchy classes. Note that this is equivalent to the standard definitions of the multiplier algebra.

    The \textit{corona algebra} $\mathscr{Q}(A)$ is the quotient $C^*$-algebra $\mathscr{Q}(A)=\mathscr{M}(A)/A$ with quotient map $\pi:\mathscr{M}(A)\rightarrow\mathscr{Q}(A)$.
\end{defn}

When $A$ is nonunital yet $\sigma$-unital, remarkably $\mathscr{Q}(A)$ is countably degree-1 saturated, and a concise proof can be found in \cite[15.1.5]{FA19}.

\section{Saturated States}

Throughout this section, we define countably degree-1 saturated states and work towards understanding them.

\begin{defn}\label{deg1}
    A \textit{degree-$1$ condition of}$(C,\varphi)$ is an expression $\rho(P(\bar{x}))=s$, where $P$ is a degree-$1$ $*$-polynomial, $s\in\mathbb{C}$, and $\rho $ is either $\|\cdot\|$ or $\varphi$. A \textit{countable degree-$1$ type} is a countable collection of degree-$1$ conditions, i.e.
    \[
t(\bar{x})=\{\rho_j(P_j(\bar{x}))=s_j:j\in\mathbb{N}\}.\]
    We define the satisfiability and realizability of $t(\bar{x})$ as in \ref{normsat}. We also define the countable degree-1 saturation of $(C,\varphi)$ analogously.
\end{defn}

In the remainder of this section, we will work towards the remarkable property of pure states which imbues them with this saturation.

\subsection{Quantum Filters}

\begin{defn}\label{filt}
    Given a state $\varphi$ on a $C^*$-algebra $A$, the \textit{quantum filter} associated to $\varphi$ is the collection
    \[\Fp=\{a\in A_{+,1}:\varphi(a)=1\},\]
    where $A_{+,1}$ are the positive, norm 1 elements of $A$.
    The \textit{projection filter} associated to $\varphi$ is the collection
    \[\Pp=\{p\in \Proj(A):\varphi(p)=1\}.\]
    We equip these collections with the order $\ll$ on $A_{+,1}$ by
    \[a\ll b\text{ iff }ab=a \text{ (equiv. }ba=a).\]
    Note that, if $p\in\Proj(A)$, $a\in A_{+,1}$, then $p\ll a$ iff $p\leq a$, the usual order on $A_{+,1}$.

    There is also an abstract definition of a quantum filter, namely a \textit{quantum filter in $A$} is a subset $\mathscr{F}\subseteq A_{+,1}$ with the property that for any $a_1,\dots,a_n\in\mathscr{F}$, $\|a_1\cdots a_n\|=1$. A \textit{maximal quantum filter} is a quantum filter not properly contained in any other.

\end{defn}

Caution: Quantum filters are not always filters in the order-theoretic sense. More precisely, they are rarely downwards directed under $\ll$ or $\leq$, though interestingly they are always very close \cite[5.3.4]{FA19} under both orderings. Thankfully, the quantum filters studied in this note will be true (P-)filters under $\ll$.

The beauty of maximal quantum filters is their natural bijective correspondence with pure states, i.e. $\varphi\mapsto\Fp$ is a bijection from pure states to maximal quantum filters \cite[5.3.8]{FA19}. However, the map $\varphi\mapsto\Fp$ from states to quantum  filters is almost never injective or surjective. For example, $\{1\}$ is always a quantum filter, but is only of the form $\Fp$ when $A=\mathbb{C}$. In fact, the more natural correspondence is between faces of the state space and quantum filters, though there are still sometimes quantum filters not corresponding to a face of the state space.

\subsection{Excision, P-Excision, and Quantum P-points}

Quantum filters were first introduced in their relation to excision, a powerful property wherein the value of $|\varphi(b)|$ can be approximated by quantities $\|aba\|$ for  $a\in A_{+,1}$.

\begin{defn}
    For a state $\varphi$ on a $C^*$-algebra $A$ and subset $\mathscr{F}\subseteq A_{+,1}$, $X\subseteq A$, we say that $\mathscr{F}$ excises $\varphi$ on $X$ if for all $b\in X$,
    \[\inf_{a\in\mathscr{F}}\|aba-\varphi(b)a^2\|=0.\]
\end{defn}

Most notably, when $\varphi$ is pure, $\mathscr{F}_\varphi$ excises $\varphi$ on the entirety of $A$ \cite[5.2.1]{FA19}. However, for small subsets $X\subseteq A$, we can usually excise $\varphi$ using a small portion of $A_{+,1}$. In fact, if $X$ is separable, then there is always some countable $\mathscr{F}\subseteq\Fp$ excising $\varphi$ on $X$. This fact was cleverly leveraged in \cite[15.3.8]{FA19} to demonstrate that, when $C$ is countably degree-1 saturated, for every separable $C^*$-subalgebra $B$ of $C$, there exists a single $a\in C_{+,1}$ excising $\varphi$ on $B$. This property is quite remarkable, though it has yet to find applications. This is likely due to the fact that $a$ cannot be chosen inside $\Fp$, so it is difficult to extract useful information from this single excising element. This is exactly the problem quantum P-points are made to solve, so let us define them now.

\begin{defn}\label{Pstuff}
    A \textit{P-filter} is a preordered set $(\mathscr{F},<)$ which is upwards closed and $\sigma$-downwards directed. A \textit{P-point} is an ultrafilter which is a P-filter under $\subseteq^*$, inclusion modulo finite sets. A \textit{quantum P-point} is a maximal quantum filter which is P-filter under $\ll$. That is, for any countable family $\mathscr{F}\subseteq\Fp$, there is $a\in\Fp$ such that $a\ll b$ for all $b\in\mathscr{F}$. We call $\varphi$ a \textit{P-state} if $\Fp$ is a quantum P-point. Likewise, we call a representation $\pi$ of $C$ a \textit{P-representation} if it is equivalent to the GNS-representation corresponding to a P-state.
\end{defn}

As mentioned above, the most important quality of quantum P-points is their strong excision property. Namely that for every separable $C^*$-subalgebra $B$ of $C$, $\varphi$ is excised on $B$ by a single $a\in\mathscr{F}_\varphi$. We call this property \textit{P-excision}. On an intuitive level, it is not hard to see why this would make a serious difference, as $a\in\Fp$ allows us to claim $|\varphi(b)|\leq\|aba\|$ for every $b\in C$, not only on the excised subalgebra. This will be crucial when realizing countable degree-1 types in theorem \ref{satequiv}.

\subsection{Equivalences}

Let us give some order-theoretic terms relevant to our main equivalence theorem.

\begin{defn}\label{order}
    A preordered set $(\mathscr{F},<)$ is \textit{countably closed} if every decreasing sequence in $\mathscr{F}$ has a lower bound. A subset $\mathscr{P}$ of $\mathscr{F}$ is \textit{cofinal} if for every $a\in\mathscr{F}$, there exists $b\in\mathscr{P}$ such that $b\leq a$. Lastly, a totally ordered set $(\mathscr{P},<)$ with no endpoints is an $\eta_1$-set if for all countable $X,Y\subseteq\mathscr{P}$ such that $a<b$ for all $a\in X$, $b\in Y$, there exists $c\in\mathscr{P}$ such that $a<c<b$ for all $a\in X$, $b\in Y$.
\end{defn}

In the following lemma to theorem \ref{satequiv}, we expose a new trick for realizing projections using degree-1 norm types in algebras of real rank zero. Before proceeding, let us define the \textit{quantum filter type}
\[\tF=\{\|x\|=1,x\geq0,\varphi(x)=1\},\]
a finite degree-1 type whose realizations are precisely $\Fp$.

\begin{lem}\label{cof}
    If $C$ has real rank zero, $\varphi$ is a state on $C$, and $(C,\varphi)$ is countably degree-$1$ saturated, then $\Pp$ is cofinal in $\Fp$.
\end{lem}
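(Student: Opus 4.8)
Fix $a\in\Fp$. Since an element of $\Pp$ is a projection, producing $p\in\Pp$ with $p\ll a$ amounts to producing a projection $p\le a$ with $\varphi(p)=1$. The plan is to write down a countable degree-$1$ type of $(C,\varphi)$ whose realizations are exactly such projections, check it is satisfiable, and apply countable degree-$1$ saturation. The only feature of the target that is not literally of degree $1$ is that it be a projection, and the role of real rank zero is to simulate this by a squeeze: if $(r_n)$ increases and $(q_n)$ decreases through projections \emph{of $C$}, and their supremum and infimum formed in $C^{**}$ coincide with a common projection $e$, then the degree-$1$ conditions $\{r_n\le x,\ x\le q_n:n\in\mathbb N\}$ force any $x\in C$ meeting them to equal $e$.

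First I would record two preliminaries. For $b\in\Fp$ and continuous $f$ on $[0,1]$ one has $\varphi(f(b))=f(1)$: from $\varphi(1-b)=0$ and Cauchy--Schwarz, $\varphi$ kills $c(1-b)$ and $(1-b)c$ for all $c$, hence kills the closure of their span, which contains $f(b)-f(1)1$. In particular the bumps $h_n:=g_n(a)$ ($g_n\colon[0,1]\to[0,1]$ continuous, $\equiv1$ near $1$, $\equiv0$ on $[0,1-2^{-n}]$) lie in $\Fp$ with $\|h_n(1-a)\|\le 2^{-n}$. Next, the degree-$1$ type $\{x\ge0,\ \|x\|=1,\ \varphi(x)=1,\ \|x-xa\|=0\}$ is satisfied approximately by the $h_n$; saturation of $(C,\varphi)$ yields a realization $c\in C$, and $ca=c$ then forces $c$ to be supported in $C^{**}$ on $\chi_{\{1\}}(a)$, so its range projection obeys $r(c)\le\chi_{\{1\}}(a)\le a$, while $c\le r(c)$ with $\varphi(c)=1$ gives $\varphi(r(c))=1$.

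Take $e:=r(c)$. Since $\overline{cCc}$ is $\sigma$-unital and of real rank zero, it has an increasing approximate unit of projections $r_n\uparrow r(c)$; the preliminary applied to $c$ with $f(t)=t^{1/2}$ gives $\varphi(c^{1/2})=1$, whence $\varphi(r_nc^{1/2})\to1$ and $|\varphi(r_nc^{1/2})|^2\le\varphi(r_n)\varphi(c)=\varphi(r_n)$, so $\varphi(r_n)\to1$; also $r_n\le r(c)\le a$. Dually, from a $\sigma$-unital hereditary subalgebra of $C$ with $C^{**}$-range projection $1-r(c)$, real rank zero gives an increasing approximate unit of projections $p_n'$, and $q_n:=1-p_n'$ decreases to $r(c)$ in $C^{**}$; shrink the $q_n$ if needed so $r_n\le q_n$. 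Now set
\[t(x)=\{x\ge0,\ \|x\|=1,\ \varphi(x)=1,\ a-x\ge0\}\cup\{x-r_n\ge0:n\in\mathbb N\}\cup\{q_n-x\ge0:n\in\mathbb N\},\]
all of degree $1$. A realization $x\in C$ satisfies $r(c)=\sup_n r_n\le x\le\inf_n q_n=r(c)$, so $x=r(c)$ is a projection of $C$ with $x\le a$ and $\varphi(x)=1$: thus $x\in\Pp$ and $x\ll a$. For satisfiability, a finite subtype mentioning indices $\le N$ is $\varepsilon$-satisfied by $b:=r_N$ for large $N$ (then $\|b\|=1$, $|\varphi(b)-1|<\varepsilon$, $a-b\ge0$ exactly since $b\le r(c)\le a$, $b-r_n\ge0$ exactly, and $q_n-b\ge0$ exactly since $b\le r(c)\le q_n$). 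Saturation of $(C,\varphi)$ then realizes $t$, and as $a$ was arbitrary, $\Pp$ is downwards cofinal in $\Fp$.

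The step I expect to need the most care is producing the decreasing sequence $q_n$, i.e.\ a $\sigma$-unital hereditary subalgebra of $C$ with $C^{**}$-range projection exactly $1-r(c)$; note $\overline{(1-c)C(1-c)}$ will not do unless $c$ is already a projection (in which case we are done without the squeeze, taking $p=c$). I would handle this either by a separate real-rank-zero lemma about reaching kernel projections by projections of $C$, or --- more robustly --- by enriching the type producing $c$ so as to control $\ker c$. This is also where the hypothesis is genuinely used: real rank zero of $C$ alone is not enough (a diagonal pure state on $\mathscr B(H)$ admits $a\in\Fp$ with no projection below it), and it is precisely the countable degree-$1$ saturation of $(C,\varphi)$ --- through the realization $c$ --- that closes the gap. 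The remaining points (the two applications of saturation, the Cauchy--Schwarz bookkeeping for $\varphi$-values, and checking the squeeze conditions are of degree $1$) are routine.
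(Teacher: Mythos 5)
Your first step is sound and is essentially the paper's first step: the type $\tF\cup\{\|xa-x\|=0\}$ is approximately satisfiable (you witness this with functional-calculus bumps $g_n(a)$, the paper with near-projections supplied by real rank zero --- either works), and saturation of $(C,\varphi)$ yields $c\in\Fp$ with $c\ll a$. The second half is where the proof breaks. Your squeeze requires a decreasing sequence of projections $q_n\in C$ with $\inf_n q_n=r(c)$ in $C^{**}$, i.e.\ it requires the range projection $r(c)$ to be \emph{closed} as well as open. There is no reason for this: $r(c)$ is open by construction (it is $\sup_n r_n$), and a projection of $C^{**}$ that is both open and closed already lies in $C$, so the input your squeeze needs is essentially equivalent to the conclusion you want to extract from it (if $r(c)\in C$ you may take $p=r(c)$ with no further saturation, while if $r(c)\notin C$ then $\inf_n q_n>\sup_n r_n$ and a realization of your type need not be a projection at all). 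Real rank zero does not control $\ker c$ or make range projections of positive elements closed, and neither of your proposed repairs addresses this: a degree-$1$ type cannot force its realization to be a projection ($\|x^2-x\|=0$ is degree $2$, and $xy=0$ in an auxiliary variable is likewise a product of two variables), and the ``kernel-projection'' lemma you would need is false in general.

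The paper avoids the canonical projection $r(c)$ entirely. It applies the saturation step a second time to obtain $c\ll b\ll a$ in $\Fp$, and then invokes the real-rank-zero interpolation fact \cite[2.7.5]{FA19}: whenever $xy=x$ there is a projection $p$ with $x\ll p\ll y$. This produces \emph{some} projection $p$ with $c\ll p\ll b\ll a$; since $cp=c$ forces $c\le p$ and $\varphi(c)=1$, we get $\varphi(p)=1$, so $p\in\Pp$ and $p\ll a$. The moral is that one should aim for an arbitrary projection interpolating a gap $c\ll b$, not for the specific projection $r(c)$; your argument is repaired by deleting the squeeze and substituting one more application of saturation followed by this interpolation.
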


\begin{proof}
    Fix $a\in\Fp$ and the type
    \[t(x)=\tF\cup \{ax=x\}.\]
    By \cite[5.3.10]{FA19}, for each $\epsilon\in(0,1)$, there exists $q\in\Pp$ such that $q\leq a+\epsilon$, so $1-a\leq 1-q+\epsilon$ and
    \[q(1-a)q\leq q(1-q+\epsilon)q=\epsilon q\leq\epsilon.\]
    Hence
    \[\|(1-a)^\frac{1}{2}q\|^2=\|q(1-a)^\frac{1}{2}(1-a)^\frac{1}{2}q\|=\|q(1-a)q\|\leq\epsilon,\]
    and
    \[\|(1-a)q\|=\|(1-a)^\frac{1}{2}(1-a)^\frac{1}{2}q\|\leq\|(1-a)^\frac{1}{2}\|\epsilon^\frac{1}{2}\leq\epsilon^\frac{1}{2}.\]
    Therefore $t(x)$ is satisfiable, so realized by $b\in\Fp$ such that $b\ll a$. Then we repeat this with $t'(x)=\tF\cup\{bx=x\}$ to obtain $c\in\Fp$ such that $c\ll b\ll a$. Then by real rank zero and \cite[2.7.5]{FA19}, there exists a projection $p$ such that $c\ll p\ll a$, so $p\in\Pp$.
\end{proof}

\begin{lem}\label{psig}
    Let $A$ have real rank zero and $\varphi$ a state on $A$.
    If $\Pp$ is a P-filter, then $\Fp$ is a P-filter. In fact, lower bounds of countable families in $\Fp$ can be chosen to be projections.
\end{lem}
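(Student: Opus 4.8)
The plan is to transfer everything to the projection filter $\Pp$, where the hypothesis already supplies $\sigma$-downwards directedness. Fix a countable family $\{a_n:n\in\mathbb{N}\}\subseteq\Fp$; what I must produce is a single element of $\Fp$ lying $\ll$-below every $a_n$, and the argument will in fact produce a projection, which gives the ``in fact'' clause. Note that this lemma does \emph{not} assume any saturation, so Lemma \ref{cof} itself is unavailable; instead I reuse the real-rank-zero input that drives it.

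The key steps, in order: (1) For each $n$ and each $k\in\mathbb{N}$, apply \cite[5.3.10]{FA19} (as in the proof of Lemma \ref{cof}, using real rank zero of $A$) to $a_n$ with error $1/k$, obtaining a projection $q_{n,k}\in\Pp$ with $q_{n,k}\le a_n+1/k$. (2) The collection $\{q_{n,k}:n,k\in\mathbb{N}\}$ is a countable subfamily of $\Pp$; since $\Pp$ is a P-filter under $\ll$, and $\ll$ restricted to projections is the usual order $\le$, there is a projection $q\in\Pp$ with $q\le q_{n,k}$ for all $n,k$. (3) Fix $n$: from $q\le q_{n,k}\le a_n+1/k$ for every $k$ we get $q\le a_n$ in the limit, and since $q$ is a projection this is precisely $q\ll a_n$ by the remark in Definition \ref{filt}. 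As $q\in A_{+,1}$ with $\varphi(q)=1$, we have $q\in\Pp\subseteq\Fp$, so $q$ is the required common $\ll$-lower bound and it is a projection. (4) Finally check $\Fp$ is upwards closed under $\ll$: if $a\in\Fp$, $b\in A_{+,1}$ and $a\ll b$, then $ab=a$, so $\varphi(ab)=\varphi(a)=1$; moreover $0\le\varphi((1-a)^2)=\varphi(a^2)-1\le0$ forces $\varphi((1-a)^2)=0$, so Cauchy--Schwarz for $\varphi$ gives $\varphi(b)-\varphi(ab)=\varphi((1-a)b)=0$, whence $\varphi(b)=1$ and $b\in\Fp$. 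Together (2)--(4) say $\Fp$ is a P-filter.

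I do not anticipate a genuine obstacle here. The whole content is the interplay between real rank zero---which lets us trap each $a_n\in\Fp$ between projections of $\Pp$ up to arbitrarily small error---and the $\sigma$-directedness of $\Pp$, which absorbs all of that error simultaneously in the limit. The one point requiring care is that \cite[5.3.10]{FA19} must hand us the approximating projection \emph{already inside} $\Pp$, not merely in $\Proj(A)$, since otherwise step (2) could not feed the family $\{q_{n,k}\}$ to the P-filter hypothesis on $\Pp$; since that citation is used in exactly this strengthened form in the proof of Lemma \ref{cof}, this is not an issue.
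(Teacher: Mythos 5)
Your argument is correct and is essentially the paper's own proof: approximate each $a_n$ from above by projections $q_{n,k}\in\Pp$ using real rank zero via \cite[5.3.10]{FA19}, take a common lower bound $q\in\Pp$ from the P-filter hypothesis, and pass to the limit in $k$ to get $q\le a_n$, hence $q\ll a_n$ since $q$ is a projection. Your explicit check that $\Fp$ is upwards closed is a detail the paper leaves implicit, not a departure in method.
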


\begin{proof}
    Enumerate a sequence $(a_n)$ in $\Fp$. As noted in \cite[5.3.10]{FA19}, for all $n,m$, there exists $p_{nm}\in\Pp$ such that $p_{nm}\leq a_n+\frac{1}{m}$. Then by the P-filter property, fix $p\in\Pp$, a lower bound for every $p_{nm}$. Then for all $n,m$,
    \[p\ll p_{nm}\leq a_n+\frac{1}{m},\]
    so $p\ll a_n$. Since $(a_n)$ was arbitrary, we are done.
\end{proof}

We state these two lemmas separately from theorem \ref{satequiv}, as they apply to all states, not only pure ones. Also, lemma \ref{psig} does not require any saturation. Of particular interest to operator algebraists is the fact that these states have weak normality, a property first discovered for projections on the Calkin algebra by Bice in \cite[6.2]{BI13}. That is, decreasing sequences $(a_n)$ in $A_+$ have lower bounds $a\in A_+$ with $\varphi(a)=\lim_{n\rightarrow\infty}\varphi(a_n)$.

\begin{thm}\label{satequiv}
    Let $C$ be countably degree-$1$ saturated and $\varphi$ a pure state on $C$. The following are equivalent:
    \begin{enumerate}
        \item $(C,\varphi)$ is countably degree-1 saturated.
        \item $\Fp$ is a quantum P-point.
    \end{enumerate}
    If in addition $C$ has real rank zero, the following are equivalent to $(1),(2)$:
    \begin{enumerate}[resume]
        \item $\Pp$ is a P-filter.
        \item $\Pp$ is countably closed.
        \item $\Pp$ is cofinal in $\Fp$.
    \end{enumerate}
    If $C$ also has no minimal projections, the following is equivalent to $(1)$-$(5)$:
    \begin{enumerate}[resume]
        \item Maximal chains in $\Pp\setminus\{1\}$ are $\eta_1$-sets.
    \end{enumerate}
    If $C$ is also purely infinite and simple, the following is equivalent to $(1)$-$(6)$:
    \begin{enumerate}[resume]
        \item $\Pp$ is a filter.
    \end{enumerate}
\end{thm}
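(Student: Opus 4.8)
The plan is to prove the master equivalence (1)$\iff$(2) first, and then to thread the order-theoretic items (3)--(7) into it one layer of hypotheses at a time, using Lemmas~\ref{cof} and~\ref{psig}. For (2)$\Rightarrow$(1) I would argue as follows. Let $t(\bar x)=\{\rho_j(P_j(\bar x))=s_j:j\in\mathbb N\}$ be a satisfiable countable degree-$1$ type of $(C,\varphi)$, where $\rho_j\in\{\|\cdot\|,\varphi\}$; write $j\in\Phi$ when $\rho_j=\varphi$. For each $F\Subset\mathbb N$ and $m\in\mathbb N$ fix a witness $\bar b_{F,m}$ to the $(F,\tfrac1m)$-instance of satisfiability, and let $B\le C$ be the separable $C^*$-subalgebra generated by all coefficients of the $P_j$ together with the countably many $\bar b_{F,m}$. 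Since $\Fp$ is a quantum P-point, P-excision yields a single $a\in\Fp$ with $aca=\varphi(c)a^2$ for all $c\in B$; note $\varphi(a^2)=\|a^2\|=1$. Then the degree-$1$ \emph{norm} type of $C$
\[ t'(\bar x)=\{\|P_j(\bar x)\|=r_j:j\notin\Phi\}\cup\{\|aP_j(\bar x)a-s_ja^2\|=0:j\in\Phi\} \]
is legitimate (the coefficient $a$ is a constant) and satisfiable: the $\bar b_{F,m}$ witness it, because for $j\in\Phi$ we have $P_j(\bar b_{F,m})\in B$, so $\|aP_j(\bar b_{F,m})a-s_ja^2\|=|\varphi(P_j(\bar b_{F,m}))-s_j|$, which is small. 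Countable degree-$1$ saturation of $C$ then realizes $t'$ by some $\bar b$, and applying $\varphi$ to $aP_j(\bar b)a=s_ja^2$ --- using the identity $\varphi(axa)=\varphi(x)$, valid for every $a\in\Fp$ since $\pi_\varphi(a)\xi=\xi$ in the GNS representation --- gives $\varphi(P_j(\bar b))=s_j$. So $\bar b$ realizes $t$.

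For (1)$\Rightarrow$(2): purity of $\varphi$ makes $\Fp$ maximal, so it is enough to show it is a P-filter under $\ll$. Given a countable $\{a_n\}\subseteq\Fp$, the realizations of $\tF\cup\{a_nx=x:n\in\mathbb N\}$ are precisely the $a\in\Fp$ with $a\ll a_n$ for all $n$, so by (1) I only need to check satisfiability. Given $k$ and $\epsilon>0$, I would take $c=\frac1k(a_1+\cdots+a_k)$, which lies in $\Fp$, and then $b=f_\delta(c)\in\Fp$ for small $\delta>0$, where $f_\delta$ is continuous, vanishes on $[0,1-\delta]$, and satisfies $f_\delta(1)=1$; one checks $\|(1-c)b^{1/2}\|\le\delta$, hence $\|b-\frac1k\sum_i b^{1/2}a_ib^{1/2}\|\le\delta$, and since each $b^{1/2}a_ib^{1/2}\le b$ this forces $\|b^{1/2}(1-a_i)b^{1/2}\|\le k\delta$ and finally $\|a_ib-b\|\le\sqrt{k\delta}<\epsilon$.

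For items (3)--(5) under real rank zero, Lemma~\ref{cof} is exactly (1)$\Rightarrow$(5); (3)$\Rightarrow$(2) is Lemma~\ref{psig} together with the maximality of $\Fp$; and (2)$\Rightarrow$(3) I would get by feeding a countable family of $\Pp$ through the quantum P-point property to obtain $a''\ll a'\ll a$ below it, interpolating a projection $q$ with $a''\ll q\ll a$ via \cite[2.7.5]{FA19} and real rank zero, and noting that $\ll$-transitivity and $\varphi(q)\ge\varphi(a'')=1$ make $q$ a lower bound of the family inside $\Pp$, which is always upwards closed. Then (3)$\Rightarrow$(4) is immediate, and the passage from (4) and (5) back to (1) is done by re-running the (2)$\Rightarrow$(1) argument once the relevant separable subalgebras are seen to be excised by a single element of $\Fp$, which \cite[5.3.10]{FA19} provides from countable closure, resp.\ cofinality, of $\Pp$. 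Finally (6) and (7): with no minimal projections, every projection of $\Pp\setminus\{1\}$ admits a strictly smaller and a strictly larger one there, so a maximal chain in $\Pp\setminus\{1\}$ has no endpoints, and countable closure of $\Pp$ lets it absorb an arbitrary countable gap --- so it is an $\eta_1$-set, the converse reading off enough $\sigma$-completeness from such a chain to recover (4); and when $C$ is purely infinite and simple, the finite directedness of $\Pp$ in (7) can be bootstrapped to the $\sigma$-directedness of (3) by amalgamating, with the help of real rank zero and the room available in $C$, any decreasing sequence of $\Pp$ into a single projection still in $\Pp$.

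The hard part is (2)$\Rightarrow$(1): a priori an element excising $\varphi$ on the coefficients of the $P_j$ still has to excise on the not-yet-constructed realization. The device above --- collect every witness to finite satisfiability first, P-excise on the (still separable) algebra they generate, build the norm type $t'$ from that one excising element, and read the $\varphi$-conditions off at the end directly from $a\in\Fp$ (via $\varphi(a\,\cdot\,a)=\varphi(\,\cdot\,)$) rather than from excision on the realization --- is what breaks the circularity, and it is the only place the quantum P-point hypothesis is genuinely used. The secondary difficulty is the order-theoretic bookkeeping that sends (4) and (5) back to (1) without simply re-deriving (2).
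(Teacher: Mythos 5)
Your core equivalence $(1)\Leftrightarrow(2)$ is essentially correct. The $(2)\Rightarrow(1)$ argument is the paper's: collect the countably many witnesses to finite satisfiability, P-excise on the separable subalgebra they generate together with the coefficients by a single $a\in\Fp$, convert the $\varphi$-conditions into norm conditions of the form $\|aP_j(\bar x)a-s_ja^2\|=0$, and read the values back off the realization via $\varphi(a\,\cdot\,a)=\varphi(\cdot)$ (the paper first normalizes $s_j=0$, but this is cosmetic). Your $(1)\Rightarrow(2)$ verifies satisfiability of $\tF\cup\{ax=x:a\in Y\}$ by a different route than the paper — averaging $c=\frac1k\sum_i a_i\in\Fp$ and applying a cutoff function $f_\delta$, versus the paper's use of the dense set $\{a:1-a\in\Lambda\}$ and an approximate unit for $L_\varphi\cap L_\varphi^*$ from \cite[5.3.4]{FA19} — and your computation checks out and is arguably more self-contained.

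The order-theoretic bookkeeping, however, has genuine gaps. First, your passage from $(5)$ back to the rest does not work as described: cofinality of $\Pp$ in $\Fp$ only yields a projection below a \emph{single} element of $\Fp$, so it cannot, together with \cite[5.3.10]{FA19}, produce one element of $\Fp$ below a countable excising family — that is precisely the directedness you are trying to establish. The paper's $(5)\Rightarrow(4)$ rests on a specific trick you omit: form $a=\sum_n\frac{1}{n(n+1)}p_n\in\Fp$, take $p\in\Pp$ with $p\ll a$ by cofinality, and observe that $p\leq p_m+(1-\frac1m)(1-p_m)$ forces $p\leq p_m$ for every $m$. Second, $(4)$ alone does not give a single excising element either: \cite[5.3.10]{FA19} produces a countable family $p_{nm}\in\Pp$ with $p_{nm}\leq a_n+\frac1m$, not a decreasing sequence, so one needs finite directedness of $\Pp$ — item $(7)$ — before countable closure applies; the equivalence $(4)\Leftrightarrow(7)$ is the nontrivial external input \cite[4.1]{BI13}, and your ``amalgamating with the room available in $C$'' does not substitute for it. Third, in $(4)\Rightarrow(6)$ the interpolation of a point strictly between two countable sets $X<Y$ is carried by the countable degree-1 saturation of $C$ (it is realized by a countable degree-1 type over $C$), not by countable closure of $\Pp$, which only handles the case $X=\emptyset$; attributing the $\eta_1$ interpolation to $(4)$ leaves that direction unproved.
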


\begin{figure}[h]
        \centering
        \begin{tikzpicture}[>=stealth, node distance=1cm, scale=0.6]
    \foreach \i in {1,...,7} {
        \node (N\i) at ({90-360/7*(\i-1)}:1.8cm) {\i};
    }
    \draw[->] (N1) -- (N2);
    \draw[<->] (N4) -- (N6);
    \draw[<->] (N4) -- (N7);
    
    \draw[->] (N3) -- (N7);
    \draw[->] (N3) -- (N4);
    \draw[->] (N1) -- (N2);
    \draw[->] (N1) -- (N3);
    \draw[->] (N3) -- (N2);
    \draw[->] (N1) -- (N5);
    \draw[->] (N5) -- (N4);

    \draw[->, dashed, bend left=55]
        (N4) to (N3);
    \draw[->, dashed, bend right=20]
        (N7) to (N3);
    \draw[->, thick, dotted, bend right=20]
        (N2) to (N1);
\end{tikzpicture}
        \caption{Equivalences}
        \label{equiv}
    \end{figure}
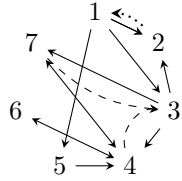

\begin{proof}
    Figure \ref{equiv} depicts the layout of equivalences. Firstly, note that $(3)\Leftrightarrow (4)\wedge(7)$, denoted by dashed arrows in figure \ref{equiv}, regardless of the assumptions on $C$, $(1)\Rightarrow (5)$ is the statement of \ref{cof}, and $(3)\Rightarrow (2)$ is the statement of \ref{psig}. We will prove the other directions in the following order: 
    \[(1) \Rightarrow (2),(3); (4)\Leftrightarrow (7);(5)\Rightarrow(4); (4)\Leftrightarrow(6)\]
    and we save $(2)\Rightarrow (1)$ for later, denoted by the dotted arrow in figure \ref{equiv}.
    
    For $(1)\Rightarrow (2)$, let $Y\subseteq\Fp$ be countable and define the type
    \[t(x)=\tF\cup\{ax=x:a\in Y\}.\]

    By \cite[5.3.4]{FA19}, the set $\mathscr{D}=\{a\in C_{+,1}:1-a\in \Lambda\}$ is dense in $\Fp$, where 
    \[\Lambda=\{b\in(L_\varphi\cap L_\varphi^*)_+:b<1\}.\]
    Now define an approximate unit $(e_\gamma)$ for $L_\varphi\cap L_\varphi^*$. Notice that for each $a\in \mathscr{D}$, $(1-e_\gamma)a-(1-e_\gamma) = (e_\gamma-1)(1-a)
        =e_\gamma(1-a)-(1-a).$ Now fix $F\Subset Y$, $\epsilon>0$, and for each $a\in F$, choose $a'\in\mathscr{D}$ such that $\|a'-a\|<\frac{\epsilon}{2}$. There exists $\gamma$ such that for all $a\in F$, $\|e_\gamma (1-a')-(1-a')\|<\frac{\epsilon}{2}$. Therefore
    \begin{align*}
        \|(1-e_\gamma)a-(1-e_\gamma)\| &\leq \|1-e_\gamma\|\|a-a'\|+\|e_\gamma(1-a')-(1-a')\| \\
        &<\frac{\epsilon}{2}+\frac{\epsilon}{2}=\epsilon
    \end{align*}
    so $(1-e_\gamma)$ satisfies $t(x)$, so it is realized by $b\in\Fp$ such that $b\ll a$ for all $a\in Y$. Since $Y$ was arbitrary, we are done.

    For $(1) \Rightarrow (3)$, fix countable $Y\subseteq\Pp\subseteq\Fp$. Then by (2), fix a lower bound $b\in\Fp$ for $Y$. Then by lemma \ref{cof} there exists $p\in\Pp$ such that $p\ll b$, so $p$ a lower bound for $Y$.

    For $(4) \Leftrightarrow (7)$, we apply both directions of \cite[4.1]{BI13}.

    For $(5)\Rightarrow (4)$, enumerate decreasing $(p_n)$ in $\Pp$. Then define $a\in C_{+,1}$ by
    \[a=\sum_{n=1}^\infty\frac{1}{n(n+1)}p_n\]
    and notice
    \[\varphi(a)=\sum_{n=1}^\infty\frac{1}{n(n+1)}=1,\]
    so $a\in\Fp$.
    By cofinality, fix some $p\in\Pp$ such that $p\ll a$. Then
    \begin{align*}
        p\leq a &= \sum_{n=1}^\infty\frac{1}{n(n+1)}p_n \\
        &\leq \sum_{n=1}^m\frac{1}{n(n+1)}+\sum_{n=m+1}^\infty\frac{1}{n(n+1)}p_n \\
        &\leq \bigg(1-\frac{1}{m}\bigg) +\frac{1}{m}p_m \\
        &\leq p_m+\bigg(1-\frac{1}{m}\bigg)(1-p_m)
    \end{align*}
    Since $1-\frac{1}{m}<1$, this is only possible if $p\leq p_m$, so $p\ll p_m$. Since $m$ and $(p_n)$ were arbitrary, $\Pp$ is countably closed.

    For (4)$\Leftrightarrow$(6), note that every case of the $\eta_1$ condition of $\mathscr{P}$ is realized by countable degree-1 types over $C$, except the case where $X$ is empty and $Y$ is countably infinite. This is to say, the condition that countably infinite $Y\subseteq\mathscr{P}$ have lower bounds in $\mathscr{P}$. Either $Y$ has a lower bound in $\mathscr{P}$ or it is cofinal in $\mathscr{P}$. Suppose $Y$ is cofinal and $\Pp$ is also countably closed, then $Y$ has a lower bound in $\Pp$, so by maximality, in $\mathscr{P}$. Thus $Y$ must contain the minimal element of $\mathscr{P}$, but $C$ has no minimal projections, so neither does $\Pp$, a contradiction. Thus, $(4)\Rightarrow(6)$. Conversely, every decreasing sequence in $\Pp$ is contained in some maximal chain, so $(6)\Rightarrow (4)$.
\end{proof}

Compare condition (6) with \cite[3]{HA98}, which states that maximal chains of projections in $\mathscr{Q}(H)$ are $\eta_1$-sets. By the above reasoning, this theorem also holds for any countably degree-1 saturated $C$ with with real rank zero and no minimal projections.

\begin{cor}\label{sqe}
Let $\varphi$ be a pure state such that $(C,\varphi)$ countably degree-$1$ saturated. Then $\varphi$ has P-excision. If in addition $C$ has real rank zero, then the excising element can always be chosen to be a projection.
\end{cor}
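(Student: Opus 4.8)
The plan is to read off the single excising element directly from the P-filter structure granted by Theorem~\ref{satequiv}. By the implication $(1)\Rightarrow(2)$ there, $\Fp$ is a quantum P-point, hence a P-filter under $\ll$. Fix a separable $C^*$-subalgebra $B\subseteq C$. Since $\varphi$ is pure, $\Fp$ excises $\varphi$ on all of $C$ by \cite[5.2.1]{FA19}; picking, for each $b$ in a countable dense subset $D\subseteq B$ and each $k\in\mathbb{N}$, a witness $a_{b,k}\in\Fp$ with $\|a_{b,k}ba_{b,k}-\varphi(b)a_{b,k}^2\|<1/k$, we obtain a countable family $\mathscr{F}=\{a_n:n\in\mathbb{N}\}\subseteq\Fp$ excising $\varphi$ on $D$, and hence on $B$, since $b\mapsto\|cbc-\varphi(b)c^2\|$ is Lipschitz with a constant uniform in $c\in C_{+,1}$.

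Next I would apply the P-filter property to get a single $a\in\Fp$ with $a\ll a_n$ for all $n$, so $aa_n=a_na=a$ and therefore $aa_n^2a=a^2$ and $a(a_nba_n)a=aba$ for every $b$. The key identity is
\[
aba-\varphi(b)a^2 = a\big(a_nba_n-\varphi(b)a_n^2\big)a \qquad (b\in C,\ n\in\mathbb{N}),
\]
so $\|aba-\varphi(b)a^2\|\le\|a_nba_n-\varphi(b)a_n^2\|$; taking the infimum over $n$ forces $aba=\varphi(b)a^2$ for all $b\in B$. Thus $a\in\Fp$ excises $\varphi$ on $B$, which is precisely the assertion that $\varphi$ has P-excision.

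For the real-rank-zero refinement I would use that $\Pp$ is cofinal in $\Fp$ (Lemma~\ref{cof}, equivalently condition $(5)$ of Theorem~\ref{satequiv}) to choose a projection $p\in\Pp$ with $p\ll a$. Then $pa=ap=p$ yields $pa^2p=p$ and $pabap=pbp$, so $pbp-\varphi(b)p=p\big(aba-\varphi(b)a^2\big)p=0$ for every $b\in B$; that is, the projection $p$ excises $\varphi$ on $B$ as well.

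Given Theorem~\ref{satequiv}, the argument is short, and the only steps requiring genuine care are the reduction from excision on the dense set $D$ to excision on $B$ (via the uniform Lipschitz estimate) and the verification that the displayed conjugation identity is exactly what transports excision down the $\ll$-order; I expect the latter to be the conceptual crux.
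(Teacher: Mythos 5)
Your argument is correct and matches the paper's proof in all essentials: a countable subfamily of $\Fp$ excising $\varphi$ on $B$, a single $\ll$-lower bound for it obtained from the quantum P-point property via Theorem~\ref{satequiv}~$(1)\Rightarrow(2)$, and the conjugation identity $aba-\varphi(b)a^2=a\,(a_nba_n-\varphi(b)a_n^2)\,a$ to transport excision down to that lower bound. The only differences are cosmetic: the paper obtains the countable excising family by citing L\"owenheim--Skolem rather than assembling it by hand from the excision theorem for pure states, and it leaves the real-rank-zero projection refinement (which you verify correctly via cofinality of $\Pp$ in $\Fp$) implicit.
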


\begin{proof}
    Let $B$ be a separable $C^*$-subalgebra of $C$. As noted in \cite[15.3.7]{FA19}, Löwenheim-Skolem grants us a countable $Y\subseteq\Fp$ which excises $\varphi$ on $B$. By theorem \ref{satequiv} $(1)\Rightarrow (2)$, this subset has a lower bound $a\in\Fp$. Then as noted in \cite[15.3.8]{FA19}, for $b\in B$, $c\in Y$,
    \[\|aba-\varphi(b)a^2\|=\|a(b-\varphi(b))a\|=\|ac(b-\varphi(b))ca\|\leq \|a\|^2\|c(b-\varphi(b))c\|,\]
    so
    \[\|aba-\varphi(b)a^2\|\leq \|a\|^2\inf_{c\in Y}\|cbc-\varphi(b)c^2\|=0.\]
    Since $b$ was arbitrary, we are done.
\end{proof}

This P-excision proof actually gives us the crucial tool for proving $(2)\Rightarrow (1)$ of theorem \ref{satequiv}.

\begin{proof}[Proof of Theorem \ref{satequiv}]
    For $(2)\Rightarrow (1)$, fix a satisfiable degree-1 type
    \[t(\bar{x})=\{\rho_j(P_j(\bar{x}))=s_j:j\in\mathbb{N}\}.\]
    By the linearity of $\varphi$, for $\rho_j=\varphi$, we can replace $P_j(\bar{x})$ by $P_j(\bar{x})-d_j$, where $d_j\in A$ such that $\varphi(d_j)=s_j$. Thus, without loss of generality, when $\rho_j=\varphi$, assume $s_j=0$. Now fix, for each $m\in\mathbb{N}$, $\bar{b}_m$ such that $\max_{j\leq m}|\rho_j(P_j(\bar{b}_m))-s_j|<\frac{1}{m}.$
    Now let $X$ be the collection of entries of each $\bar{b}_m$ and coefficients of each $P_j$. Then $X$ is countable, so $B=C^*(X)$ is separable. Mimicking the proof of \ref{sqe}, there exists a single $a\in\Fp$ which excises $\varphi$ on $B$. Now define the modified *-polynomials 
    \[P'_j(\bar{x}) = \begin{cases} 
        aP_j(\bar{x})a, & \rho_j=\varphi; \\
        P_j(\bar{x}), & \rho_j=\|\cdot\|; 
        
   \end{cases}\]
   and fix the countable degree-$1$ norm type
   \[t'(\bar{x})=\{\|P'_j(\bar{x})\|=s_j:j\in\mathbb{N}\}\]
   and notice $(\bar{b}_m)$ satisfies $t'(\bar{x})$. 
   Since $C$ is countably degree-1 saturated, $t'(\bar{x})$ is realized. When $\rho_j=\varphi$, we have
   \[|\varphi(P_j(\bar{b}))|=|\varphi(aP_j(\bar{b})a)|\leq\|aP_j(\bar{b})a\|=0.\]
   Therefore, $\bar{b}$ realizes $t(\bar{x})$.
\end{proof}

Now that we have these equivalences, we will simply refer to pure $\varphi$ such that $(C,\varphi)$ is countably degree-1 saturated as a P-state.

\section{Infinite Transitivity}

The reader may have noticed a concerning gap between the countable degree-1 saturation of $(C,\varphi)$ and an infinite Kadison transitivity theorem: Vector equations in $H_\varphi$ are degree-2 conditions of the form 
\[0=\|xa-b\|^\varphi_2=\varphi((xa-b)^*(xa-b))^\frac{1}{2}.\] 
The following lemma shows the true potential of P-states and addresses these concerns.

\begin{lem}\label{fields}
    Let $C$ be countably degree-1 saturated. Then $C$ is saturated over countable types consisting of the following conditions
    \begin{enumerate}
        \item $\|P(\bar{x})\| \in K$
        \item $\|P(\bar{x})\|^\varphi_2 \leq r$
        \item $\varphi(P(\bar{x})) \in \Lambda$
    \end{enumerate}
    where $P(\bar{x})$ a degree-$1$ $*$-polynomial, $K\subseteq[0,\infty)$, $\Lambda\subseteq\mathbb{C}$ both compact, $r\geq 0$, and $\varphi$ a P-state.
\end{lem}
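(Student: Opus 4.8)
The plan is to coerce a satisfiable type of this kind into a satisfiable countable degree-$1$ type of the pair $(C,\varphi)$ in the sense of Definition \ref{deg1}, and then realize it using the countable degree-$1$ saturation of $(C,\varphi)$, which is available precisely because $\varphi$ is a P-state. Conditions of types (1) and (3), once their possibly non-exact values are pinned down to specific targets, are already legitimate degree-$1$ conditions of $(C,\varphi)$ --- a norm condition and a $\varphi$-condition on the degree-$1$ $*$-polynomial $P(\bar x)$. The genuinely new ingredient is type (2): the expression $\|P(\bar x)\|^\varphi_2=\varphi\big(P(\bar x)^*P(\bar x)\big)^{1/2}$ is degree-$2$, and lowering its degree is exactly what P-excision (Corollary \ref{sqe}) is for.

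Concretely, suppose $t(\bar x)=\{C_j(\bar x):j\in\mathbb N\}$ is satisfiable in the evident approximate sense, partition $\mathbb N=I_1\sqcup I_2\sqcup I_3$ according to the type of $C_j$, and for each $m$ fix $\bar b_m$ witnessing the first $m$ conditions to within $1/m$ (i.e.\ $\mathrm{dist}(\|P_j(\bar b_m)\|,K_j)<1/m$, $\|P_j(\bar b_m)\|^\varphi_2<r_j+1/m$, or $\mathrm{dist}(\varphi(P_j(\bar b_m)),\Lambda_j)<1/m$, as $j\in I_1,I_2,I_3$). Let $B\subseteq C$ be the separable $C^*$-algebra generated by all entries of all $\bar b_m$ together with all coefficients of all $P_j$, and invoke Corollary \ref{sqe} to choose $a\in\Fp$ with $aca=\varphi(c)a^2$ for all $c\in B$. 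Since $a\in\Fp$ we have $0\le a\le 1$, $\varphi(1-a)=0$ and $\|a^2\|=1$, and a routine Cauchy--Schwarz computation (exactly as in the proof of $(2)\Rightarrow(1)$ of Theorem \ref{satequiv}) gives $\varphi(axa)=\varphi(x)$ for every $x\in C$. Combining this with excision on $B$, for every $j\in I_2$ and every $m$,
\[
\|P_j(\bar b_m)a\|^2=\big\|aP_j(\bar b_m)^*P_j(\bar b_m)a\big\|=\varphi\big(P_j(\bar b_m)^*P_j(\bar b_m)\big)\,\|a^2\|=\big(\|P_j(\bar b_m)\|^\varphi_2\big)^2,
\]
so $\|P_j(\bar b_m)a\|=\|P_j(\bar b_m)\|^\varphi_2$; and the decisive point is that $\widetilde P_j(\bar x):=P_j(\bar x)a$ is again a degree-$1$ $*$-polynomial over $C$.

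Next I would pin down the targets. For each $j$ the scalar $\|P_j(\bar b_m)\|$ ($j\in I_1$), $\|P_j(\bar b_m)a\|$ ($j\in I_2$), or $\varphi(P_j(\bar b_m))$ ($j\in I_3$) is bounded for $m\ge j$, so a diagonal argument produces a subsequence of $(\bar b_m)$ along which all of them converge, to values $s_j\in K_j$, $\sigma_j\le r_j$, and $z_j\in\Lambda_j$ respectively (these land in the right sets since $K_j,\Lambda_j$ are compact and $\limsup_m\|P_j(\bar b_m)\|^\varphi_2\le r_j$). Then
\[
t^\sharp(\bar x)=\{\|P_j(\bar x)\|=s_j\}_{j\in I_1}\cup\{\|\widetilde P_j(\bar x)\|=\sigma_j\}_{j\in I_2}\cup\{\varphi(P_j(\bar x))=z_j\}_{j\in I_3}
\]
is a countable degree-$1$ type of $(C,\varphi)$, the chosen subsequence witnesses its satisfiability, and since $(C,\varphi)$ is countably degree-$1$ saturated ($\varphi$ being a P-state), $t^\sharp$ is realized by some $\bar b$. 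For $j\in I_1\cup I_3$ this $\bar b$ already realizes $t$ since $s_j\in K_j$ and $z_j\in\Lambda_j$; for $j\in I_2$, applying $\varphi(axa)=\varphi(x)$ once more and the positivity of the state $\varphi$,
\[
\big(\|P_j(\bar b)\|^\varphi_2\big)^2=\varphi\big(aP_j(\bar b)^*P_j(\bar b)a\big)\le\big\|aP_j(\bar b)^*P_j(\bar b)a\big\|=\|P_j(\bar b)a\|^2=\sigma_j^2\le r_j^2,
\]
hence $\|P_j(\bar b)\|^\varphi_2\le r_j$, so $\bar b$ realizes $t$.

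The only real obstacle is handling the degree-$2$ conditions of type (2), and everything hinges on Corollary \ref{sqe} delivering a \emph{single} $a\in\Fp$ that excises $\varphi$ on the whole separable algebra $B$ carrying the witnesses: that is what turns $\|P_j(\bar x)\|^\varphi_2$ into the degree-$1$ norm $\|P_j(\bar x)a\|$, and the one-sided estimate $\varphi(axa)\le\|axa\|$ --- valid for arbitrary arguments, not merely those in $B$ --- is what lets the realization $\bar b$ (which is not in $B$) still satisfy the original type-(2) conditions in the $\le$ direction that is all we need.
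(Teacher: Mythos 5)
Your proposal is correct and follows essentially the same route as the paper's proof: use P-excision (Corollary \ref{sqe}) to find a single $a\in\Fp$ excising $\varphi$ on the separable algebra carrying the approximate witnesses, replace each condition $\|P_j(\bar x)\|_2^\varphi\le r_j$ by the degree-$1$ norm condition on $P_j(\bar x)a$, pin the targets down by a compactness argument, realize the resulting countable degree-$1$ type, and verify the type-(2) conditions for the realization via the one-sided estimate $\varphi(aya)\le\|aya\|$ valid for all $y$, not just $y\in B$. The only deviations are cosmetic: you pin down targets with a diagonal subsequence where the paper uses an ultrafilter limit, and you keep the $\varphi$-conditions as conditions of $(C,\varphi)$ (invoking Theorem \ref{satequiv}) where the paper converts them to norm conditions $\|a_\varphi(P_j(\bar x)-d_j)a_\varphi\|=0$ so as to use only the degree-$1$ saturation of $C$ itself.
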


\begin{proof}
    Let $t(\bar{x})=\{\rho_j(P_j(\bar{x}))\in K_j:j\in\mathbb{N}\}$ be a satisfiable countable type consisting of conditions above. To prove that it is realized, we will perform two conversions of $t(\bar{x})$.
    
    Since $t(\bar{x})$ is satisfiable, fix $\bar{b}_m$ such that $\max_{j\leq m}d(\rho_j(P_j(\bar{b}_m)),K_j)\leq\frac{1}{m}$. Then fix an ultrafilter $\U$ on $\mathbb{N}$. Let $s_j=\lim_{m\rightarrow\U}\rho_j(P_j(\bar{b}_m))$. Then
    \[t'(\bar{x})=\{\rho_j(P_j(\bar{x}))=s_j:j\in\mathbb{N}\}\]
    is satisfiable type whose realizations realize $t(\bar{x})$. Now for each $\varphi$, choose $a_\varphi\in\Fp$ excising the $C^*$-subalgebra $B$ generated by coefficients of $P_j(\bar{x})$ and entries of $\bar{b}_m$. Then, for each $j$ such that $\rho_j=\varphi$, choose $d_j\in B$ such that $\varphi(d_j)=s_j$. Define
    \[P'_j(\bar{x}) = \begin{cases} 
        a_\varphi(P_j(\bar{x})-d_j)a_\varphi, & \rho_j=\varphi; \\
        P_j(\bar{x})a_\varphi, & \rho_j=\|\cdot\|_2^\varphi; \\
        P_j(\bar{x}), & \rho_j=\|\cdot\|;
   \end{cases}\]

   \[r_j = \begin{cases} 
        0, & \rho_j=\varphi; \\
        s_j, & \text{otherwise}.
   \end{cases}\]
   
   The type
   \[t''(\bar{x})=\{\|P'_j(\bar{x})\|=r_j:j\in\mathbb{N}\}\]
   is a satisfiable countable degree-1 norm type whose realizations realize $t(\bar{x})$. To see this, we only need to check the case where $\rho_j=\|\cdot\|_2^\varphi$,
   \[\|P_j(\bar{b})\|_2^\varphi=\varphi(P_j(\bar{b})^*P_j(\bar{b}))^\frac{1}{2}\leq\|a_\varphi^*P_j(\bar{b})^*P_j(\bar{b})a_\varphi\|^\frac{1}{2}=\|P_j(\bar{b})a_\varphi\|=r_j\in K_j.\]
   In this case, $K_j$ is of the form $[0,r]$, so $\|P_j(\bar{b})\|^\varphi_2\in K_j$.
   Therefore, since $t''(\bar{x})$ is realized, so is $t(\bar{x})$.
\end{proof}

We now prove the main results of the paper, infinite Glimm-Kadison for direct sums of P-representations of countably degree-1 saturated $C^*$-algebras, from which infinite transitivity will follow.

\begin{proof}[Proof of Theorem \ref{igk}]
    Let $K_j$ be a separable subspace of $H_j$ and enumerate an orthonormal basis $(\xi_i)$ of $K=\bigoplus_jK_j$ compatible with each $K_j$. Every $T\in\prod_j\mathscr{B}(K_j)$ is determined by $(T\xi_i)$. For each $i,j\in\mathbb{N}$, lift $a_{ij}+L_{\varphi_j}=(\xi_i)_j$ and $b_{ij}+L_{\varphi_j}=(T\xi_i)_j$.
    Define the type
    \[t(x)=\{\|x\|\leq\|T\|,\|xa_{ij}-b_{ij}\|^{\varphi_j}_2=0:i,j\in\mathbb{N}\}.\]
    Since $t(x)$ consists of conditions from \ref{fields}, it suffices to show that it is satisfiable. Fix $m\in\mathbb{N}$. Because $\{\xi_j:j\leq m\}$ is orthogonal, by the Glimm-Kadison theorem, for each $\epsilon>0$ there exists $d\in C$ such that $\pi_j(d)\xi_{i}=T\xi_i$ for all $i,j\leq m$ and $\|d\|<\|T\|+\epsilon$. That is,  $\|da_{ij}-b_{ij}\|^{\varphi_j}_2=0,$ for all $j\leq m$ and $\|d\|<\|T\|+\epsilon$. Therefore $t(x)$ is satisfiable and is realized by $c\in C$. Finally, $\|c\|\geq\|\pi(c)\|\geq\|T\|$, so $\|c\|=\|T\|$ and $P_K\pi(c)P_k=T$. 
    
    For self-adjoint or positive, add the necessary generalized degree-$1$ norm conditions to $t(x)$ \cite[15.2.9]{FA19}, yielding a satisfiable type which is realized by countably degree-1 saturated $C$, thanks to \ref{fields}.

    For unitary, notice that by the Borel functional calculus, $T$ has self-adjoint complex logarithm $S\in\prod_j\mathscr{B}(K_j)$. Then, by the self-adjoint case, there exists self-adjoint $s\in C$ so that $P_K\pi(s)P_K=S$, so $P_K\pi(e^{is})P_K=T$, so $c=e^{is}$ is our desired unitary.
\end{proof}

We note that we can readily produce infinite versions of \cite[3.5.5, 3.8.3, 3.9.6, 5.1.3, 5.6.6]{FA19} in the case of P-states/representations, but from here these proofs are so elementary that they would only serve to dilute this text.

\section{Existence of P-states}

The conclusion of theorem \ref{satequiv} can be leveraged to show it is relatively consistent with ZFC that there are no P-states on many corona algebras. However, in any model of ZFC which admits a P-point (an extremely weak assumption), we can produce a P-state on many coronas. To accomplish this, we want to mimic the construction of diagonal pure states on $\mathscr{Q}(H)$. As such, we need a generalization of an orthonormal sequence of vectors.

\begin{defn}\label{Pbuilder}
    Let $A$ be nonunital, $\sigma$-unital. We call a sequence $(q_j)$ of projections a \textit{sub-$\sigma$ sequence of $A$} if there is a $\sigma$-unit $(e_j)$ of $A$ such that $q_j\leq e_j-e_{j-1}$, $e_{-1}=0$. For a sub-$\sigma$ sequence, define the map $p:\mathscr{P}(\mathbb{N})\rightarrow\Proj(\mathscr{M}(A))$ by $p_X$ equal to the strict limit of the sequence
    \[p_{X,n}=\sum_{\substack{j\in X \\ j\leq n}}q_j\]
    and we abuse the notation $p_X=\sum_{j\in X}q_j$.
    For each $\varphi\in S(\mathscr{Q}(A))$, define
    \[\Xp=\{X\subseteq\mathbb{N}:\varphi(\pi(p_X))=1\}.\]

    We call a sub-$\sigma$ sequence \textit{scalar} if $q_j$ is a scalar projection for all $j$ and there is a faithful, nondegenerate representation $\rho:A\rightarrow \mathscr{B}(H)$ such that $\rho(q_j)$ has finite rank for all $j$.
\end{defn}

To justify the definition of a scalar sub-$\sigma$ sequence, let $A=\bigoplus_{i\in I}\mathscr{K}(H_i)$ such that $\bigoplus_{i\in I}H_i$ is infinite dimensional, separable. Then $A$ has a scalar sub-$\sigma$ sequence. Namely, we identify $A\subseteq\mathscr{B}(\bigoplus_i H_i)$, fix an orthonormal basis $(\xi_j)$ of $\bigoplus_iH_i$ compatible with the direct sum, making $q_j=\text{proj}_{\mathbb{C}\xi_j}$ scalar sub-$\sigma$. Any subsequence of a scalar sub-$\sigma$ sequence is still scalar sub-$\sigma$.

The following lemma was originally proven for the Calkin algebra by Zamora-Avilés \cite[2.5.10]{ZA10}. Here we provide a version generalized to coronas of algebras with a sub-$\sigma$ sequence.

\begin{lem}\label{beatr}
    Let $A$ be a nonunital, $\sigma$-unital $C^*$-algebra. For every sub-$\sigma$ sequence in $A$, decreasing sequence $X_n\subseteq\mathbb{N}$, and $a\in \mathscr{Q}(A)_{+,1}$, we have $a\ll \pi(p_{X_n})$ for all $n$ if and only if there exists a pseudointersection $X\subseteq\mathbb{N}$ of $(X_n)$ such that $a\ll \pi(p_X)$.
\end{lem}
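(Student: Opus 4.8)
The plan is to reduce both implications to computations in $\mathscr{M}(A)$ with its strict topology; the forward direction will be a diagonalisation of the sort familiar from the Calkin-algebra literature. For the easy direction, suppose $X$ is a pseudointersection of $(X_n)$ with $a \le \pi(p_X)$. For each $n$ the set $X \setminus X_n$ is finite, so $p_{X\setminus X_n}$ is a finite orthogonal sum of the projections $f_k \in A$, hence lies in $A$; thus $\pi(p_X) = \pi(p_{X\cap X_n}) \le \pi(p_{X_n})$ and $a \le \pi(p_{X_n})$ for all $n$.

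For the substantial direction, I would fix a positive contraction lift $\tilde a \in \mathscr{M}(A)$ of $a$ and set $Y_n = \mathbb{N}\setminus X_n$, so that $(Y_n)$ is increasing and $p_{Y_n} = 1 - p_{X_n}$ (using $e_m \to 1$ strictly, i.e.\ $p_{\mathbb{N}} = 1$). Since $\pi(p_{X_n})$ is a projection with $a \le \pi(p_{X_n})$, we have $(1 - \pi(p_{X_n}))a = 0$, i.e.\ $p_{Y_n}\tilde a \in A$, equivalently $\tilde a\, p_{Y_n} \in A$. As $(e_m)$ is an approximate unit, an element $b \in \mathscr{M}(A)$ lies in $A$ iff $\|b(1 - e_m)\| \to 0$; and $p_{Y_n}(1 - e_m) = p_{Y_n \cap (m,\infty)}$. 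Hence $\|\tilde a\, p_{Y_n \cap (m,\infty)}\| \to 0$ as $m \to \infty$ for each $n$, and I may choose a strictly increasing sequence $(m_n)$ with $\|\tilde a\, p_{Y_n \cap (m_n,\infty)}\| < 2^{-n}$.

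Next I would put $Y = \bigcup_n\bigl(Y_n \cap (m_n,\infty)\bigr)$ and $X = \mathbb{N}\setminus Y$. Since $Y \supseteq Y_n \cap (m_n,\infty)$, we get $Y_n \setminus Y \subseteq [0,m_n]$, so $X \subseteq^* X_n$ for all $n$; thus $X$ is a pseudointersection. It then suffices to show $a \le \pi(p_X)$, which, since $\pi(p_X) = 1 - \pi(p_Y)$ is a projection and $\|a\| \le 1$, will follow once we know $\tilde a\, p_Y \in A$, i.e.\ $\|\tilde a\, p_{Y\cap(m,\infty)}\| \to 0$ as $m \to \infty$. The combinatorial observation that makes this work is that, writing $n(k)$ for the least index with $k \in Y_n$, one has the \emph{disjoint} decomposition $Y = \bigsqcup_n Y_{(n)}$ with $Y_{(n)} := \{\, k : n(k) = n,\ k > m_n \,\} \subseteq (Y_n \setminus Y_{n-1}) \cap (m_n,\infty)$; consequently $\|\tilde a\, p_{Y_{(n)}}\| \le \|\tilde a\, p_{Y_n \cap (m_n,\infty)}\| < 2^{-n}$ and $\bigcup_{n\le N} Y_{(n)} \subseteq Y_N$ for every $N$.

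The main obstacle is the resulting norm estimate on $\|\tilde a\, p_{Y\cap(m,\infty)}\|$. Given $\varepsilon > 0$ I would pick $N$ with $2^{-N} < \varepsilon/2$ and split the orthogonal sum $p_{Y\cap(m,\infty)} = \sum_n p_{Y_{(n)}\cap(m,\infty)} = Q_1 + Q_2$ according to $n \le N$ and $n > N$. For $Q_1$, since $\bigcup_{n\le N}Y_{(n)} \subseteq Y_N$ we have $Q_1 \le p_{Y_N\cap(m,\infty)}$, so $\|\tilde a Q_1\| \le \|\tilde a\, p_{Y_N\cap(m,\infty)}\| = \|\tilde a\, p_{Y_N}(1-e_m)\| \to 0$ as $m \to \infty$, using $\tilde a\, p_{Y_N} \in A$. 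For $Q_2$, I would invoke the elementary bound that for mutually orthogonal projections $(q_n)$ in $\mathscr{M}(A)$ with strict sum $q$ one has $\|\tilde a q\| \le \sum_n \|\tilde a q_n\|$: using the standard formula $\|x\| = \sup\{\|xh\| : h \in A,\ \|h\| \le 1\}$ for $x \in \mathscr{M}(A)$ together with $q h = \sum_n q_n h$ in norm for $h \in A$ (as $\sum_{n\le M}q_n \to q$ strictly), one gets $\|\tilde a q h\| \le \sum_n \|\tilde a q_n h\| \le \sum_n \|\tilde a q_n\|$ whenever $\|h\| \le 1$. Applied with $q_n = p_{Y_{(n)}\cap(m,\infty)}$, $n > N$, this yields $\|\tilde a Q_2\| \le \sum_{n>N} 2^{-n} = 2^{-N} < \varepsilon/2$. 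Hence $\|\tilde a\, p_{Y\cap(m,\infty)}\| < \varepsilon$ for all sufficiently large $m$, so $\tilde a\, p_Y \in A$ and $a \le \pi(p_X)$, completing the argument.
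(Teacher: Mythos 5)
Your proof is correct and follows essentially the same route as the paper's: both arguments diagonalize over the blocks $X_n\setminus X_{n+1}$, choose cut points beyond which $\tilde a$ times the corresponding tail projection has norm $<2^{-n}$, and then run the same head/tail $\ell^1$ estimate on a strict sum of orthogonal projections. The differences are cosmetic --- you work with the complements $Y_n$ and pick the thresholds $m_n$ directly, where the paper enumerates the blocks and uses one function dominating all the moduli $f_{1/m}$ --- and you additionally spell out the easy converse direction and the justification of the $\ell^1$ bound for strict sums, both of which the paper leaves implicit.
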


\begin{proof}
    Without loss of generality, suppose $X_1=\mathbb{N}$ and $X_n\setminus X_{n+1}$ infinite. Enumerate $X_n\setminus X_{n+1}=\{j^n_i:i\in\mathbb{N}\}$. Now define $p^n:\mathscr{P}(\mathbb{N})\rightarrow \Proj(\mathscr{M}(A))$ by $p^n_Y=p_{\{j^n_i:i\in Y\}}$. Fix a lift $\tilde{a}\in\mathscr{M}(A)$, so $a=\pi(\tilde{a})\ll\pi(p_{X_n})$, and we have 
    \[\pi(p_{X_n\setminus X_{n+1}}\tilde{a})=(\pi(p_{X_n})-\pi(p_{X_{n+1}}))a=a-a=0.\]
    By the sub-$\sigma$ property, for each $m$, there is $k$ such that $(1-e_k)p^n_{\mathbb{N}}=p^n_{\mathbb{N}\setminus m}$. This yields
    \[\lim_{k\rightarrow\infty}\|p^n_{\mathbb{N}\setminus k}\tilde{a}\|=\lim_{k\rightarrow\infty}\|(1-e_k)p_{X_n\setminus X_{n+1}}\tilde{a}\|=0,\]
    so the function $f_\epsilon\in\mathbb{N}^\mathbb{N}$ by
    \[f_\epsilon(n)=\min\Big\{k:\|p^n_{\mathbb{N}\setminus k}\tilde{a}\|<\frac{\epsilon}{2^{n+1}}\Big\}\]
    is well-defined. Fix $f\in\mathbb{N}^\mathbb{N}$ such that $f_\frac{1}{m}\leq^*f$ for all $m$. That is, $f_\frac{1}{m}(n)\leq f(n)$ for eventually all $n$. Now we claim that $X=\bigcup_{n\in\mathbb{N}}\{j^n_i:i\leq f(n)\}$
    is our desired pseudointersection. Since $X_n$ decreases from $\mathbb{N}$, $X_n\setminus X_{n+1}$ partitions $\mathbb{N}$, so
    \[X\setminus X_n=\bigcup_{j=1}^{n-1}X\cap(X_j\setminus X_{j+1})=\bigcup_{j=1}^{n-1}\{j^n_i:i\leq f(n)\},\]
    so
    \[|X\setminus X_n|=\sum_{j=1}^{n-1}f(j)<\infty,\]
    thus $X$ is a pseudointersection. Now we claim $a\ll\pi(p_X)$, and it suffices to show that $\|(1-e_k)p_{\mathbb{N}\setminus X}\tilde{a}\|\rightarrow 0$. By $f_\frac{1}{m}\leq^*f$, we have
    \begin{align*}
        \|(1-e_k)p_{\mathbb{N}\setminus X}\tilde{a}\| &\leq  \sum_{n=1}^\infty\|(1-e_k)p^n_{\mathbb{N}\setminus f(n)}\tilde{a}\| \\
        &\leq \sum_{n=1}^{N-1}\|(1-e_k)p^n_{\mathbb{N}\setminus f(n)}\tilde{a}\|+\sum_{n=N}^\infty\frac{1}{m2^{n+1}} \\
        &\leq \sum_{n=1}^{N-1}\|(1-e_k)p^n_{\mathbb{N}\setminus f(n)}\tilde{a}\|+\frac{1}{m2^N} \\
        &\leq \sum_{n=1}^{N-1}\|(1-e_k)p_{X_n\setminus X_{n+1}}\tilde{a}\|+\frac{1}{m} \\
        &\leq \frac{1}{m}+\frac{1}{m}=\frac{2}{m}
    \end{align*}
    by taking $k$ so that $\|(1-e_k)p_{X_n\setminus X_{n+1}}\tilde{a}\|\leq\frac{1}{(N-1)m}$ for all $n< N$.
\end{proof}

This lemma shows us that P-states cannot naturally arise from non-P-points.

\begin{cor}\label{Pup}
    Let $A$ be nonunital, $\sigma$-unital. For every sub-$\sigma$ sequence, there exists a natural $\varphi:\beta\mathbb{N}\setminus\mathbb{N}\hookrightarrow P(\mathscr{Q}(A))$ such that $\Xp=\mathscr{U}$ and $\varphi_\mathscr{U}$ is a P-state only if $\U$ is a P-point.
\end{cor}

\begin{proof}
    Let $(q_j)$ be a sub-$\sigma$ sequence. Fix $\varphi_j\in P(A)$ such that $\varphi_j(q_j)=\|q_j\|=1$. By \cite[3.3.9]{MU90}, these each have unique extensions $\tilde{\varphi_j}\in S(\mathscr{M}(A))$ and $\tilde{\varphi_j}$ is pure because $q_j$ is still scalar in $\mathscr{M}(A)$. Now define $\varphi:\beta\mathbb{N}\setminus\mathbb{N}\hookrightarrow S(\mathscr{Q}(A))$ by $\varphi_\U(a)=\lim_{j\rightarrow\U}\tilde{\varphi_j}(\tilde{a})$, for any lift $\tilde{a}\in\mathscr{M}(A)$ of $a$. We know these are states because they lift to weak$^*$-limits of pure states on $\mathscr{M}(A)$ and they are pure by \cite[1]{AN79}.

    Suppose $\U$ is not a P-point, and fix decreasing $X_n\in\U$ with no pseudointersection in $\U$. Our claim is that $\pi(p_{X_n})$ has no lower bound in $\mathscr{F}_{\varphi_\U}$. By \ref{beatr}, any lower bound $a\in\Fp$ of $(\pi(p_{X_n}))$ must in fact have $a\ll \pi(p_X)$, where $X$ is a pseudointersection of $(X_n)$, therefore no such $a$ exists.
\end{proof}

The following theorem gives us a method for finding P-states.

\begin{thm}\label{stoneinj}
    Let $A$ be nonunital, $\sigma$-unital. For every scalar sub-$\sigma$ sequence in $A$, there exists a canonical $\varphi:\beta\mathbb{N}\setminus\mathbb{N}\hookrightarrow P(\mathscr{Q}(A))$ such that $\Xp=\mathscr{U}$ and $\varphi_\mathscr{U}$ is a P-state if and only if $\U$ is a P-point. In particular, if $\U$ is a P-point, lower bounds of countable families in $\mathscr{F}_{\varphi_\U}$ can be chosen of the form $\pi(p_X)$ for $X\in\U$.
\end{thm}

\begin{proof}
    Let $(q_j)$ be a scalar sub-$\sigma$ sequence. As $q_j$ is a scalar projection, there is a unique $\varphi_j\in P(A)$ such that 
    \[\mathscr{F}_{\varphi_j}=\{a\in A_{+,1}:q_j\leq a\}.\]
    We define $\varphi:\beta\mathbb{N}\setminus\mathbb{N}\hookrightarrow P(\mathscr{Q}(A))$ as in \ref{Pup}. By \ref{Pup}, if $\U$ is not a P-point, $\varphi_\U$ is not a P-state

    Conversely, suppose $\mathscr{U}$ is a P-point. By the scalar sub-$\sigma$ property, fix a faithful, nondegenerate representation $\rho:A\rightarrow\mathscr{B}(H)$ such that $\rho(q_j)$ is finite dimensional. There is a unique extension $\tilde{\rho}:\mathscr{M}(A)\rightarrow\mathscr{B}(H)$, per \cite[13.2.1]{FA19}. Fix an orthonormal basis $(\xi_\gamma:\gamma<\kappa)$ of $H$ which simultaneously diagonalizes $\tilde{\rho}(q_j)$ for every $j$ and express
    $\tilde{\rho}(q_j)=\text{proj}_{\overline{\text{span}}\{\xi_\gamma:\gamma\in\Lambda_j\}}$ for some $\Lambda_j\Subset\kappa$.

    Fix $a\in\mathscr{F}_{\varphi_\U}$ and a lift $\tilde{a}\in\mathscr{M}(A)$ such that $\|\tilde{a}\|=1$. Now let 
    \[X_n=\big\{k\in\mathbb{N}:\tilde{\varphi_k}(\tilde{a})>1-\frac{1}{n}\big\}.\]
    Since $\mathscr{U}$ is a P-point, there is $X\in\mathscr{U}$ such that $X\subseteq^* X_n$. We first claim that
    \[\|\pi(\rho(p_{X}(1-\tilde{a})p_{X}-\sum_{j\in {X}}q_j(1-\tilde{a})q_j))\|=0,\]
    where $\pi:\mathscr{B}(H)\rightarrow\mathscr{Q}(H)$ is the quotient map. Suppose this is not the case. Then there exists $\epsilon>0$ and infinitely many $\gamma<\kappa$ such that
    \[\|\tilde{\rho}\big(\sum_{\substack{i,j\in X \\ i\neq j}}q_i\tilde{a}q_j\big)\xi_\gamma\|\geq\epsilon.\]
    Some linear algebra yields
    \[\|\tilde{\rho}\big(\sum_{\substack{i,j\in X \\ i\neq j}}q_i\tilde{a}q_j\big)\xi_\gamma\|^2=\sum_{\substack{i\in X \\ i\neq j}}\|\tilde{\rho}(q_i\tilde{a}q_j)\xi_\gamma\|^2,\]
    when $\gamma\in \Lambda_j$ and $0$ otherwise. Now,
    \begin{align*}
    \|\tilde{\rho}(p_{X}\tilde{a}p_{X})\xi_\gamma\|^2 &=\sum_{i\in X}\|\tilde{\rho}(q_i\tilde{a}q_j)\xi_\gamma\|^2 \\
    &\geq \|\tilde{\rho}(q_j\tilde{a}q_j)\xi_\gamma\|^2+\epsilon^2 \\
    &\geq \tilde{\varphi_j}(\tilde{a})^2\|\tilde{\rho}(q_j)\xi_\gamma\|^2+\epsilon^2 \\
    &\geq \tilde{\varphi_j}(\tilde{a})^2+\epsilon^2
    \end{align*}
    when $\gamma\in\Lambda_j$ as in the assumption. Since there are infinitely many such $\gamma$, we can take $\gamma\in \Lambda_j$ with $j\in X\setminus k$ for arbitrarily large $k$, \[\|\tilde{a}\|^2\geq\|\tilde{\rho}(p_{X}\tilde{a}p_{X})\|^2\geq\|\tilde{\rho}(p_{X}\tilde{a}p_{X})\xi_\gamma\|^2\geq \big(1-\frac{1}{n}\big)^2+\epsilon^2>1\]
    for sufficiently large $n$, a contradiction. Thus, $\tilde{\rho}(p_X(1-\tilde{a})p_X-\sum_{j\in X}q_j(1-\tilde{a})q_j)\in \mathscr{K}(H)$. Now, because $e_k\rightarrow 1_{\mathscr{B}(H)}$ strongly, $p_X(1-\tilde{a})p_X-\sum_{j\in X}q_j(1-\tilde{a})q_j\in A$, so back in $\mathscr{M}(A)$ with $\pi:\mathscr{M}(A)\rightarrow\mathscr{Q}(A)$ the quotient map
    \begin{align*}
        \|\pi(p_X(1-\tilde{a})p_X)\| &= \|\pi(\sum_{j\in X}q_j(1-\tilde{a})q_j)\| \\
        &= \lim_{k\rightarrow\infty}\|(1-e_k)\sum_{j\in X}q_j(1-\tilde{a})q_j\| \\
        &= \lim_{k\rightarrow\infty}\|\sum_{j\in X\setminus k}q_j(1-\tilde{a})q_j\|\\
        &= \lim_{k\rightarrow\infty}\sup_{j\in X\setminus k}\|q_j(1-\tilde{a})q_j\| \\
        &= \lim_{k\rightarrow\infty}\sup_{j\in X\setminus k}\tilde{\varphi_j}(1-\tilde{a}) =0.
    \end{align*}
    Therefore,
    \begin{align*}
        \|(1-a)\pi(p_X)\|^2 &\leq \|(1-a)^\frac{1}{2}\pi(p_x)\|^2 \\
        &\leq \|\pi(p_X(1-\tilde{a})p_X)\|=0,
    \end{align*}
    so $\pi(p_X)\ll a$.

    Finally, fix $(a_n)$ in $\mathscr{F}_{\varphi_\U}$ and $X_n\in\mathscr{U}$ so that $\pi(p_{X_n})\ll a_n$. Since $\U$ is a P-point, there is $X\in\U$ with $X\subseteq^* X_n$. Therefore $\pi(p_X)\ll \pi(p_{X_n})\ll a_n$.
\end{proof}

The existence of a P-point is extremely common in models of ZFC. In fact, until recently \cite{CH19}, there were very few known models of ZFC which have no P-points. Their existence is implied by the continuum hypothesis, is consistent with various standard forcing axioms, and even $\text{OCA}_\text{T}$. For the operator algebraist reader who does not fancy the continuum hypothesis, rest assured that P-states are likely relatively consistent with whichever axioms one fancies most.

\begin{lem}\label{extremepain}
    Let $A$ be nonunital, $\sigma$-unital with a sub-$\sigma$ sequence, and $\varphi\in P(\mathscr{Q}(A))$. If $\varphi$ is a P-state, then $\Xp$ is a non-meagre P-filter. Also, for any decreasing $X_n\subseteq\mathbb{N}$, there exists a pseudointersection $X$ of $(X_n)$ such that $\varphi(\pi(p_X))=\inf_n\varphi(\pi(p_{X_n}))$. Also, if $\varphi(\pi(p_{X_n}))\not\rightarrow0$, there exists $(k_n)$ in $\mathbb{N}$ increasing and $X\subseteq\mathbb{N}$ such that $X\subseteq^*\bigcup_{j\in [k_n,k_{n+1})}X_j$ and $\varphi(X)>0$.
\end{lem}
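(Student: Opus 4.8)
The plan is to extract, from a P-state $\varphi$, enough combinatorial control over the filter $\Xp$ on $\mathbb{N}$ to deduce each of the three assertions, using the quantum-side P-point property (theorem \ref{satequiv}) together with the translation lemma \ref{beatr} between the classical pseudointersection order on subsets of $\mathbb{N}$ and the order $\ll$ on the projections $\pi(p_X)$. First I would record the basic dictionary: $X \in \Xp$ iff $\varphi(\pi(p_X)) = 1$, and since each $\pi(p_X)$ is a projection, $\pi(p_X) \ll \pi(p_Y)$ iff $\pi(p_X) \le \pi(p_Y)$ iff (up to the kernel $A$) $X \subseteq^* Y$; more precisely, lemma \ref{beatr} tells us that $a \le \pi(p_{X_n})$ for all $n$ is witnessed by a pseudointersection. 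So upward closure of $\Xp$ is immediate (if $X \subseteq^* Y$ and $X \in \Xp$ then $\pi(p_X) \le \pi(p_Y)$ so $\varphi(\pi(p_Y)) \ge \varphi(\pi(p_X)) = 1$), and $\sigma$-directedness of $\Xp$ follows from the quantum P-point property: given $X_n \in \Xp$, theorem \ref{satequiv} gives $a \in \Fp$ with $a \ll \pi(p_{X_n})$ for all $n$, and by lemma \ref{beatr} there is a pseudointersection $X$ of $(X_n)$ with $a \le \pi(p_X)$; then $\varphi(\pi(p_X)) \ge \varphi(a) = 1$, so $X \in \Xp$ and $X \subseteq^* X_n$ for each $n$. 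Non-meagreness of $\Xp$ I would get from the $P$-filter property plus the standard Talagrand/Jalali-Naini characterization (a filter is meagre iff it is contained in a filter generated by a partition into finite intervals, which a $\sigma$-directed filter closed under the witnessed pseudointersections cannot be) — or, more self-containedly, from the third assertion below, which directly contradicts the interval-partition criterion.

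For the second assertion, fix decreasing $X_n \subseteq \mathbb{N}$ and let $r = \inf_n \varphi(\pi(p_{X_n})) = \lim_n \varphi(\pi(p_{X_n}))$ (the limit exists since $\pi(p_{X_n})$ is decreasing and $\varphi$ is positive). I would form a single witnessing element inside $C = \mathscr{Q}(A)$: consider the degree-1 type
\[
t(x) = \{\, x \ge 0,\ \|x\| = 1,\ \varphi(x) = r,\ \pi(p_{X_n}) x = x : n \in \mathbb{N} \,\}.
\]
Satisfiability is checked finitely: for $F \Subset \mathbb{N}$, take $n_0 = \max F$; then $\pi(p_{X_{n_0}})$ has $\varphi$ close to $r$ and absorbs all $\pi(p_{X_n}x) = x$ for $n \in F$ since $X_{n_0} \subseteq X_n$ — one may need to shrink it slightly inside $\Fp$ to push $\varphi$ exactly toward $r$ using the tail projections, but this is routine. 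Since $\varphi$ is a P-state, $(C, \varphi)$ is countably degree-1 saturated, so $t(x)$ is realized by some $a \in C_{+,1}$ with $\varphi(a) = r$ and $a \le \pi(p_{X_n})$ for all $n$. Lemma \ref{beatr} then produces a pseudointersection $X$ of $(X_n)$ with $a \le \pi(p_X)$, hence $\varphi(\pi(p_X)) \ge r$; and since $X \subseteq^* X_n$ forces $\pi(p_X) \le \pi(p_{X_n})$ up to $A$, we get $\varphi(\pi(p_X)) \le \varphi(\pi(p_{X_n})) \to r$, so $\varphi(\pi(p_X)) = r$, as desired.

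For the third assertion, suppose $\varphi(\pi(p_{X_n})) \not\to 0$, so (passing to a subsequence if necessary) there is $\delta > 0$ with $\varphi(\pi(p_{X_n})) \ge \delta$ for infinitely many $n$. The plan is again to build a single element: consider the type asserting $x \ge 0$, $\|x\| = 1$, $\varphi(x) \ge \delta$ (a closed condition, hence covered by lemma \ref{fields} for P-states), together with the requirement that $x$ be "supported" on a block union $\bigcup_{j \in [k_n, k_{n+1})} X_j$ for a suitable fast-growing sequence $(k_n)$ chosen so that the finite partial sums $\sum_{j < k} \varphi(\text{\dots}) $ telescope appropriately; then realize it by countable degree-1 saturation and read off $X$ from lemma \ref{beatr} applied to the associated decreasing family of interval-block projections. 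The cleanest route is probably: define $Y_n = \bigcup_{j \le n} X_j$ is increasing so consider instead complements or use the diagonal trick of \ref{beatr}'s proof directly — enumerate the "new" points of $X_n \setminus X_{n+1}$ and choose finitely many from each block so that the resulting $X$ has $\pi(p_X)$ with $\varphi$-value bounded below by $\delta$, which is exactly what the single excising/realizing element hands us.

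\textbf{Main obstacle.} The delicate point is the third assertion: controlling $\varphi(\pi(p_X)) > 0$ from below while forcing $X \subseteq^* \bigcup_{j \in [k_n,k_{n+1})} X_j$ requires choosing the blocking sequence $(k_n)$ \emph{before} knowing $X$, yet the value $\varphi(\pi(p_X))$ depends on $X$; the resolution is to set up the degree-1 type so that the blocking is encoded in the coefficients of the $*$-polynomials and the $\ge \delta$ condition is a single closed condition from lemma \ref{fields}, so that countable degree-1 saturation of $(C,\varphi)$ does the work of simultaneously meeting all constraints — the one genuine subtlety is verifying \emph{satisfiability} of that type, which amounts to checking that for each finite stage one can find an actual finite-block subset with $\varphi$-value close to $\delta$, and this in turn uses that $\pi(p_{X_n})$ is decreasing with $\varphi$-values not tending to $0$ so that each tail already carries mass $\ge \delta - \epsilon$. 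The first assertion's non-meagreness is comparatively soft once one invokes the standard characterization, but if a self-contained argument is wanted it should be deduced from (the proof of) the third assertion, which exhibits exactly the kind of "slalom" escaping every interval partition that non-meagreness requires.
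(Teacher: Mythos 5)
Your treatment of the second assertion and of the P-filter property of $\Xp$ matches the paper's argument (realize the ``obvious'' degree-1 type $\{x\ge 0,\ \|x\|=1,\ \varphi(x)=r,\ \pi(p_{X_n})x=x\}$ by countable degree-1 saturation, then apply lemma \ref{beatr} and the monotonicity $\varphi(\pi(p_X))\le\varphi(\pi(p_{X_n}))$), and that part is correct. However, there are two genuine gaps. First, non-meagreness does \emph{not} follow from the P-filter property via the Talagrand--Jalali-Naini characterization: the Fr\'echet filter is a P-filter under $\subseteq^*$ (the set $\mathbb{N}$ itself is a pseudointersection of any countable family of cofinite sets) and is meagre, so the claim that a $\sigma$-directed filter ``closed under the witnessed pseudointersections'' cannot be caught by an interval partition is false as stated. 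Nothing in your argument forces the realized pseudointersection to be thin relative to a \emph{given} interval partition $(I_n)$ while retaining $\varphi$-value $1$; applying the P-filter property to the tails $\bigcup_{n\ge k}I_n$ gives only the projection $1$ and yields no information. The paper instead restricts to the copy of $\ell^\infty/c_0$ generated by $\{p_{I_n}\}$, uses that $\Pp$ is countably closed (theorem \ref{satequiv}) and that the projection lattice of $\ell^\infty/c_0$ is countably closed and atomless, and invokes \cite[6.1]{BI13} to produce a nontrivial projection of $\varphi$-value $1$ sitting below the partition. Your fallback of deducing non-meagreness from the third assertion also fails: that assertion only delivers $\varphi(\pi(p_X))>0$, not $X\in\Xp$.

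Second, your third assertion is a statement of intent rather than a proof: the entire content is the construction of the blocking sequence $(k_n)$, and you explicitly defer it as the ``main obstacle'' without resolving it. The paper's argument is quantitative: with $\epsilon=\limsup_n\varphi(\pi(p_{X_n}))/2$, a counting bound (at most $3^{n+1}/\epsilon$ indices $m$ can satisfy $\varphi(\pi(p_{X_m\setminus\bigcup_{j\in[k_n,m)}X_j}))\ge\epsilon/3^{n+1}$) shows one can choose $k_{n+1}$ so that every later $X_m$ has mass less than $\epsilon/3^{n+1}$ outside the block $\bigcup_{j\in[k_n,k_{n+1})}X_j$; the telescoping estimate then shows the decreasing partial intersections of the block unions retain mass at least $\epsilon/2$, and the second assertion (normality) applied to that decreasing sequence produces $X$. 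Without this choice of $(k_n)$ the type you propose cannot even be written down, and its satisfiability is precisely the missing combinatorial step. Note also that the assertion carries content only when $(X_n)$ is not assumed decreasing (otherwise $\bigcup_{j\in[k_n,k_{n+1})}X_j=X_{k_n}$ and the claim already follows from the second assertion), a case your sketch does not engage with.
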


\begin{proof}
    We mimic the proof of \cite[6.2]{BI13}. To show that $\Xp$ is non-meagre, take an arbitrary interval partition $(I_n)$ of $\mathbb{N}$ and let $B$ be the $A$-strict completion of the $C^*$-subalgebra generated by $\{p_{I_n}:n\in\mathbb{N}\}$. This is isomorphic to $\ell^\infty$ and its quotient $B$ is isomorphic to $\ell^\infty/c_0$. Then $\Proj(B)$ is countably closed and atomless, so by \cite[6.1]{BI13}, $\Pp^B\neq\{1\}$. Therefore, $\Xp$ is non-meagre.

    Using the obvious type, there exists $a\in\Pp$ such that $a\ll \pi(p_{X_n})$ and $\varphi(a)=\inf_n\varphi(\pi(p_{X_n}))$. Then by \ref{beatr}, there is a pseudointersection $X$ of $(X_n)$ such that $a\ll\pi(p_X)$, so $1=\varphi(a)\leq\varphi(\pi(p_X))$. It then follows that $\Xp$ is a P-filter.

    For the last part of the claim, let $\epsilon=\limsup_{n\rightarrow\infty}\frac{\varphi(\pi(p_{X_n}))}{2}>0$. If \[\varphi(\pi(p_{X_{m_{i+1}}\setminus\bigcup_{j\in[k,m_i)}X_j}))\geq\frac{\epsilon}{3^{n+1}}\] for $i<l$, where $k<m_0<\cdots<m_l$, then $l\epsilon<3^{n+1}$. We may then choose $k_n$ such that $\varphi(\pi(p_{X_j}))>\epsilon$ for some $k_n\leq j< k_{n+1}$ and \[\varphi(\pi(p_{X_m\setminus\bigcup_{j\in[k_n,k_{n+1})}X_j}))<\frac{\epsilon}{3^{n+1}}\] for all $m\geq k_{n+1}$. Then
    \[\varphi(\pi(p_{\bigcap_{n\leq m}\bigcup_{j\in [k_n,k_{n+1})}X_j}))>\epsilon-\sum_{n<m}\frac{\epsilon}{3^{n+1}}\geq \epsilon -\frac{\epsilon}{2}=\frac{\epsilon}{2}\]
    By the normality ensured in the second paragraph, we have $X\subseteq^*\bigcup_{j\in[k_n,k_{n+1})}X_j$ so that $\varphi(\pi(p_X))\geq\frac{\epsilon}{2}>0$.
\end{proof}

\begin{thm}\label{corsat}
    Both of the following statements are relatively consistent with ZFC.
    \begin{enumerate}
        \item For every nonunital, $\sigma$-unital $A$ admitting a scalar sub-$\sigma$ sequence, there exist $2^\mathfrak{c}$-many inequivalent P-states $\varphi$ on $\mathscr{Q}(A)$.
        \item For every nonunital, $\sigma$-unital $A$ admitting a sub-$\sigma$ sequence, there exist no P-states on $\mathscr{Q}(A)$.
    \end{enumerate}
\end{thm}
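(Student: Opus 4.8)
The two clauses of the theorem pull in opposite directions, so I would prove them by unrelated arguments: clause~(1) by an outright construction under an extra axiom, clause~(2) by a reflection against a forcing.

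\emph{Clause (1).} I would work in a model of the continuum hypothesis, relatively consistent with ZFC by G\"odel. Since there are only $\mathfrak c$ permutations of $\mathbb{N}$, any model with $2^{\mathfrak c}$ P-points has $2^{\mathfrak c}$ pairwise non-isomorphic P-points, and CH supplies $2^{\mathfrak c}$ P-points. Fix a nonunital $A$ with a $\sigma$-unit of projections $(e_n)$ and apply Theorem~\ref{stoneinj}: it yields the embedding $\mathscr{U}\mapsto\varphi_{\mathscr{U}}$ of $\beta\mathbb{N}\setminus\mathbb{N}$ into $P(\mathscr{Q}(A))$ with $\mathscr{X}_{\varphi_{\mathscr{U}}}=\mathscr{U}$, carrying P-points to P-states. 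As $\mathscr{X}$ separates the $\varphi_{\mathscr{U}}$, this already produces $2^{\mathfrak c}$ distinct P-states. For \emph{inequivalence}, suppose $\varphi_{\mathscr{V}}=\varphi_{\mathscr{U}}\circ\mathrm{Ad}\,u$ for a unitary $u\in\mathscr{Q}(A)$; then $\varphi_{\mathscr{U}}$ is $\{0,1\}$-valued on $u\,\overline{\mathrm{span}}^{\mathrm{str}}\{\pi(e_n-e_{n-1})\}\,u^{*}$, and lifting to $\mathscr{M}(A)$ and comparing the barycentres of the vector states defining $\varphi_{\mathscr{U}}$ with their twists by $u$ forces $\mathscr{V}$ to be Rudin--Keisler equivalent to $\mathscr{U}$, hence isomorphic to it. An isomorphism class of ultrafilters has size $\le\mathfrak c<2^{\mathfrak c}$, so the $2^{\mathfrak c}$ pairwise non-isomorphic P-points yield $2^{\mathfrak c}$ pairwise inequivalent P-states. (When $A$ is separable, e.g.\ $A=\mathscr{K}(H)$, one may instead simply note $|\mathscr{Q}(A)|=\mathfrak c$, so each unitary-equivalence class of pure states has size $\le\mathfrak c$, and conclude by a crude count.)

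\emph{Clause (2).} I would pass to the forcing extension implicit in \cite{BI13} — a model with no P-points in which, moreover, the order-theoretic configurations attached to a P-state have been destroyed (cf.\ \cite{CH19}). Suppose towards a contradiction that $\varphi$ is a P-state on $\mathscr{Q}(A)$ there. By Theorem~\ref{satequiv} the quantum filter $\mathscr{F}_\varphi$ is a quantum P-point, and by Lemma~\ref{extremepain}, for every $\sigma$-unit of projections the set-filter $\mathscr{X}_\varphi$ is a non-meagre P-filter, $\varphi$ is weakly normal along every decreasing $(X_n)$ (there is a pseudointersection $X$ with $\varphi(\pi(p_X))=\inf_n\varphi(\pi(p_{X_n}))$), and the block-interpolation clause of \ref{extremepain} holds. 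The plan is to feed a generic interval partition added by the forcing into these properties: by the Jalali-Naini--Talagrand criterion non-meagreness supplies, over each generic block structure, a set in $\mathscr{X}_\varphi$ missing infinitely many blocks; the $\ll$-lower bounds coming from the quantum-P-point property upgrade this on a countably generated corner isomorphic to $\ell^\infty/c_0$ on which $\varphi$ restricts to a character; and weak normality then forces that character to name a P-point, contradicting the choice of model.

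\emph{Where the difficulty lies.} Clause~(1) is essentially formal given Theorem~\ref{stoneinj}; its only non-trivial point is the inequivalence — showing a conjugating unitary must essentially permute the $(e_n-e_{n-1})$, which is routine for the Calkin algebra but needs the barycentre analysis above in general. The genuine obstacle is clause~(2), and precisely the \emph{reflection}: a P-state lives in the forcing extension, and countable degree-$1$ saturation of $(\mathscr{Q}(A),\varphi)$ only ever handles countably many conditions at once, so it does not obviously localize. One must extract from $\varphi$ a ground-model combinatorial core — a character on an $\ell^\infty/c_0$-corner whose ultrafilter is forced to be a P-point, or more directly a configuration that the forcing of \cite{BI13} has killed — and it is here that the weak normality and quantum-P-point content of Lemma~\ref{extremepain} and Theorem~\ref{satequiv} must be pushed hardest. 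I expect this bridging step between the functional-analytic P-state and the purely combinatorial non-existence statement, rather than any computation, to be the crux.
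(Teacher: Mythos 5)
Your proposal matches the paper's proof in both halves: clause (1) is obtained under CH by feeding a family of $2^{\mathfrak c}$ pairwise RK-inequivalent P-points (the paper uses selective ultrafilters) into Theorem \ref{stoneinj}, with the inequivalence of the resulting states handled by essentially the counting over isomorphism classes you describe, and clause (2) is exactly the forcing argument of \cite[6.5]{BI13} run with Lemma \ref{extremepain} in place of \cite[6.2]{BI13}, so that in that extension no state can satisfy the conclusion of \ref{extremepain}. The paper's own write-up of both steps is no more detailed than yours.
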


\begin{proof}
    For (1), under the continuum hypothesis, there are $2^\mathfrak{c}$ RK-inequivalent selective ultrafilters (which are P-points) on $\mathbb{N}$, so the map $\varphi:\beta\mathbb{N}\setminus\mathbb{N}\hookrightarrow P(\mathscr{Q}(A))$ in \ref{stoneinj} produces $2^\mathfrak{c}$ inequivalent P-states.

    For (2), we use the forcing argument outlined in lemma \cite[6.5]{BI13}, where we use \ref{extremepain} in place of \cite[6.2]{BI13}. In this model, there are no $\varphi$ on $\mathscr{Q}(A)$ having the conclusion of \ref{extremepain}, so there are no P-states.
\end{proof}

\subsection{Counterexample to Infinite Transitivity}

The question of whether infinite transitivity in $(\mathscr{Q}(H),\pi_\varphi)$ implies degree-1 saturation of $(\mathscr{Q}(H),\varphi)$ is fairly opaque. In this section, we at least prove that in ZFC, the Calkin algebra has an irreducible representation which does not admit infinite transitivity.

\begin{defn}\label{idemp}
    For $X\subseteq\mathbb{N}$, $j\in\mathbb{N}$, we define $X+j=\{m+j:m\in X\}$ and for an ultrafilter $\U$ on $\mathbb{N}$, $\U+j=\{X+j:X\in \U\}$. Since $\beta\mathbb{N}$ is compact, we can define $\U+\mathscr{V}=\lim_{j\rightarrow\mathscr{V}}(\U+j)$. We call a nonprincipal ultrafilter $\U$ \textit{idempotent} if $\U+\U=\U$.
\end{defn}

    By \cite[1]{EL58}, every compact topological space with a semigroup structure whose left operation is continuous admits an idempotent element. The addition defined in \ref{idemp} plainly grants this semigroup structure on $\beta\mathbb{N}$, so there is $\U\in\beta\mathbb{N}$ idempotent, and this cannot be principal.

\begin{proof}[Proof of Theorem \ref{counter}]
    Let $\U$ be an idempotent ultrafilter on $\mathbb{N}$. We claim that $\pi=\pi_{\varphi_\U}$ fails infinite transitivity. Let $s\in\mathscr{B}(H)$ be the left shift along an orthonormal basis $(\eta_n)$ of $H$ and $\varphi_\U$ as in \ref{stoneinj} along $(\eta_n)$. Let $\xi_j=\pi(s^j)+L_{\varphi_\U}$. Then the equations
    \[\pi(a)\xi_j=\delta_{j0}\xi_j\]
    imply
    \[\varphi(\pi(s^j)^*a\pi(s^j))=\lim_{n\rightarrow\U}a_{n-j}=\delta_{j0}\]
    where $a_n=\langle a\eta_n,\eta_n\rangle$. Thus $X_1=\{n\in\mathbb{N}:|a_n-1|<\epsilon\}\in\U$ and for all $j>1$, $X_j=\{n\geq j:|a_{n-j}|<\epsilon\}\in\U$. However, $X_j=(X_1+j)\setminus j$, so by idempotency, $\{j\in\mathbb{N}:X_1+j\in\U\}\in \U$, so $\varphi(\pi(s^j)^*a\pi(s^j))=1$ for some $j\in\mathbb{N}$. Therefore, there is no $a\in\mathscr{Q}(H)$ such that $\pi(a)\xi_j=\delta_{j0}\xi_j$, so infinite transitivity fails.
\end{proof}

This tells us that infinite transitivity is not a property intrinsic to all irreducible representations of countably degree-1 saturated $C^*$-algebras, so the infinite transitivity of P-representations comes from the nontrivial interplay between the state and the algebra.

\begin{bibdiv}
  \begin{biblist}

\bib{AN79}{article}{
  author={Anderson, Joel},
  title={Extreme points in sets of positive linear maps on $B(H)$},
  journal={J. Funct. Anal.},
  volume={31},
  date={1979},
  number={2},
  pages={195--217},
}

\bib{BI13}{article}{
  author={Bice, Tristan},
  title={Filters in \(\mathrm{C}^*\)-algebras},
  journal={Canad. J. Math.},
  volume={65},
  date={2013},
  number={3},
  pages={485-509},
}

\bib{CH19}{article}{
  author={Chodounský, David},
  author={Guzmán, Osvaldo},
  title={There are no P-points in Silver extensions},
  journal={Israel J. Math.},
  volume={232},
  date={2019},
  number={2},
  pages={759–773},
}

\bib{EL58}{article}{
  author={Ellis, Robert},
  title={Distal transformation groups},
  journal={Pacific J. Math.},
  volume={8},
  date={1958},
  pages={401–405},
}

\bib{FA10}{article}{
  author={Farah, Ilijas},
  author={Phillips, N.\ Christopher},
  author={Stepr\=ans, Juris},
  title={The commutant of $L(H)$ in its ultrapower may or may not be trivial},
  journal={Mathematische Annalen},
  volume={347},
  number={3},
  date={2010},
  pages={711–728},
}

\bib{FA11}{article}{
  author={Farah, Ilijas},
  author={Hart, Bradd},
  title={Countable saturation of corona algebras},
  journal={C. R. Math. Acad. Sci. Soc. R. Can.},
  volume={33},
  date={2011},
  number={2},
  pages={35–56},
}

\bib{FA14}{article}{
  author={Farah, Ilijas},
  author={Wolfsey, Eric},
  title={Set theory and operator algebras},
  booktitle={Logic and Algebra},
  series={Contemporary Mathematics},
  volume={690},
  publisher={American Mathematical Society},
  address={Providence, RI},
  date={2014},
  pages={63–120},
}

\bib{FA17}{article}{
  author={Farah, Ilijas},
  title={A new bicommutant theorem},
  journal={Pacific Journal of Mathematics},
  volume={288},
  date={2017},
  pages={69--85},
}

\bib{FA19}{book}{
  author={Farah, Ilijas},
  title={Combinatorial Set Theory of C*-algebras},
  publisher={Springer},
  date={2019},
}

\bib{HA98}{article}{
   author = {Hadwin, Don},
   title = {Maximal nests in the Calkin algebra},
   journal = {Proc. Amer. Math. Soc.},
   volume = {126},
   number = {4},
   pages = {1109-1113},
   year = {1998},
}

\bib{MU90}{book}{
  author={Murphy, Gerald J.},
  title={C*-Algebras and Operator Theory},
  publisher={Academic Press},
  date={1990},
}

\bib{YA08}{article}{
  author={Ben Yaacov, Itaï},
  author={Berenstein, Alexander},
  author={Henson, C. Ward},
  author={Usvyatsov, Alexander},
  title={Model Theory for Metric Structures},
  journal={Model Theory with Applications to Algebra and Analysis, Vol. II},
  volume={350},
  date={2008},
  pages={315--427},
}

\bib{ZA10}{article}{
  author={Zamora-Avilés, Beatriz},
  title={The structure of order ideals and gaps in the Calkin algebra},
  journal={Fund. Math.},
  volume={209},
  date={2010},
  number={2},
  pages={143--160},
}

\bib{ZA25}{article}{
  author={Zamora-Avilés, Beatriz},
  title={The Structure of Solid Analytic FK-AK Spaces},
  journal={Topology and its Applications},
  date={2025},
  note={published online 31 Aug 2025},
}

  \end{biblist}
\end{bibdiv}

\end{document}